\documentclass{amsart}

\usepackage{hyperref}
\hypersetup{%
pdftitle={},%
pdfauthor={K. S.},%
bookmarks, bookmarksopen,%
colorlinks, linkcolor=blue,%
hyperindex=true%
}
\usepackage{fset}
\usepackage{times} %palatino, newcent, times, pslatex=same as times except that  the Monospace (typewriter, \tt) font family is more narrow spaced, 

\title{Lyndon words and Fibonacci numbers}
\author{Kalle Saari}
\address{Department of Mathematics and Statistics,
University of Winnipeg, 515 Portage Avenue, Winnipeg, MB R3B 2E9, Canada}
\email{kasaar2@gmail.com}
%\date{17 July 2012}

\begin{document}

\maketitle

\begin{abstract}
It is a fundamental property of non-letter Lyndon words that they can be expressed as a concatenation of two shorter Lyndon words.
This leads to a naive lower bound $\nceil{\log_{2}(n)} + 1$ for the number of distinct Lyndon factors that a Lyndon word of length $n$ must have,
but this bound is not optimal. In this paper we show that a much more accurate lower bound is $\nceil{\log_{\phi}(n)} + 1$, where $\phi$ denotes the golden ratio
$(1 + \sqrt{5})/2$. We show that this bound is optimal in that it is attained by the Fibonacci Lyndon words.
We then introduce a mapping $\calL_{\bfx}$ that counts the number of Lyndon factors of length at most $n$ in an infinite word~$\bfx$.
We show that a recurrent infinite word $\bfx$ is aperiodic if and only if $\calL_{\bfx} \geq \calL_{\bff}$, where $\bff$ is the Fibonacci infinite word, with equality if and only if $\bfx$ is in the shift orbit closure of~$\bff$. 
\end{abstract}

\keywords \textbf{Keywords:} Lyndon word, Fibonacci word, Central word, Golden ratio, Sturmian word, Periodicity

%\tableofcontents

\section{Introduction}

Lyndon words are primitive words that are the lexicographically smallest words in their conjugacy classes~\cite{Lothaire1997}. 
Originally defined in the context of free Lie algebras~\cite{CheFoxLyn1958}, Lyndon words have shown to be 
a useful tool for a variety of problems in combinatorics ranging from the construction of de Bruijn sequences~\cite{FreMai1978} to
proving the optimal lower bound for the size of uniform unavoidable sets~\cite{ChaHanPer2004}.
One of the fundamental properties of Lyndon words is their recursive nature: if $w$ is a non-letter Lyndon word, 
then there exist two shorter Lyndon words $u$ and $v$ such that $w = uv$~\cite{CheFoxLyn1958}. This implies that the number of different Lyndon factors of $w$ is bounded below by $\bceil{\log_{2}\nabs{w}} + 1$, but a little experimentation shows that this is hardly optimal. One of the results of this paper, 
Corollary~\ref{ti 17-07-2012:1504}, is that a much better lower bound is $\bceil{\log_{\phi}\nabs{w}} + 1$, where $\phi$ denotes the golden ratio $(1 + \sqrt{5})/2$. 
Here the base of the logarithm is optimal, because the Fibonacci Lyndon words attain the lower bound. 
This follows from Theorem~\ref{ti 10-07-2012:1247}, in which we show that if $w$ is a Lyndon word with $\nabs{w} \geq F_{n}$, where $F_{n}$ is the $n^{\text{th}}$ Fibonacci number, then the number of distinct Lyndon factors in $w$ is at least~$n$ with equality if and only if $w$ equals one of the two Fibonacci Lyndon words of length $F_{n}$, up to renaming letters.

It also makes sense to count the number of Lyndon factors of infinite words, but here we have to use caution: if an infinite word is aperiodic, it will have infinitely many Lyndon factors, as we will show in Corollary~\ref{100620121216}. Thus we define a mapping $\calL_{\bfx} \colon \N \rightarrow \N$ for which $\calL_{\bfx}(n)$ is the number of distinct Lyndon words of length at most~$n$ occurring in a given infinite word~$\bfx$. Of special importance is the Fibonacci infinite word $\bff$.
Our first main result in this setting, Theorem~\ref{020720120123}, is that if $\bfx$ is aperiodic, then $\calL_{\bfx} \geq \calL_{\bff}$.
As Lyndon words are unbordered, this is an improvement of a classic result by Ehrenfeucht and Silberger~\cite{EhrSil1979} stating only that an aperiodic infinite word must have arbitrarily long unbordered factors. 
%Indeed, Lyndon words being unbordered, we can now say exactly how many unbordered factors of at most a given length there must be.
If we confine our realm to recurrent infinite words, then the above result can be improved as follows.  In Theorem~\ref{060720122147} we show that
a recurrent  infinite word $\bfx$ is aperiodic if and only if $\calL_{\bfx} \geq \calL_{\bff}$ with equality if and only if $\bfx$ is in the shift orbit closure of the Fibonacci word~$\bff$, up to renaming letters.

Fibonacci words are sort of a universal optimality prover in that they possess a wide range of extremal properties, see e.g.~\cite{Cassaigne2008,CroRyt1995,DieHarNow2010,MigResSal1998, GatFlo2012}. The problem of the enumeration of Lyndon factors in automatic and linearly recurrent sequences has recently been studied in~\cite{GocSaaSha2012}.

\section{Preliminaries}

In this section we establish the notation  of this paper and present some preliminary results.
We assume the reader is familiar with the usual terminology of words and 
languages as given in~\cite{AllSha2003} or~\cite{Lothaire2002}.

Let $\calA$ be a finite, nonsingular alphabet totally ordered by $<$; thus every pair of distinct letters $\wa, \wb \in \calA$ satisfy either $\wa < \wb$ or $\wb < \wa$, but not both. We use the same symbol `$<$' to denote the usual order relation among the integers, 
but this should not cause problems as the context always tells which order is meant.
In what follows, we sometimes assume that $\o, \i \in \calA$, sometimes $\wa, \wb \in \calA$, and then their mutual order is implicitly assumed to be their ``natural order,'' so that $\o < \i$ and $\wa < \wb$.

The set of all finite words over $\calA$ is denoted by $\calA^{*}$ and the set of finite words excluding the empty word $\epsilon$ is denoted by $\calA^{+}$.

Let $w = a_{1} a_{2} \cdots a_{n}$ be a nonempty finite word  with $a_{i} \in \calA$ and $n \geq 1$. 
The \emph{length} of $w$ is $\nabs{w} = n$; we denote the cardinality of a set $X$ by $\# X$.
The \emph{reversal} of $w$ is the word $w^{R} = a_{n} a_{n-1} \cdots a_{1}$.
If $w^{R} = w$, then $w$ is a \emph{palindrome}.
The word $w$ has period $p\geq 1$ if $a_{i+p} = a_{i}$ for all $i = 1, 2, \ldots, n - p$. According to this definition, any integer $p\geq n$ is a period of $w$.
If $p\leq n$, then $p$ is a period of $w$ if and only if there exist words $x,y,z \in \calA^{*}$ such that $w = xy = zx$ and $\nabs{y} = \nabs{z} = p$. 
If $w$ has no periods  smaller than $\nabs{w}$, then it is called \emph{unbordered}, otherwise $w$ is \emph{bordered}.  
Suppose that $w = pfs$ with $p,f,s \in \calA^{*}$.
Then $p,f,s$ are called a \emph{prefix}, \emph{factor}, and \emph{suffix} of $w$, respectively.
In addition, $p$ and $s$ are \emph{proper} prefix and suffix if they do not equal~$w$.
We say that a word $z\in \calA^{+}$ is a \emph{periodic extension} of $w$ if $z$ is a prefix of a word in $w^{+}$. We abuse the word ``extension''
here in that we allow an ``extension'' to be a prefix of $w$. The word we get from $w$ by deleting its last letter is denoted by $w^{\flat}$; thus $w^{\flat} = a_{1} a_{2}\cdots a_{n-1}$.  Also if $w = xy$ for some words $x,y$, we denote $x^{-1}w = y$ and $wy^{-1} = x$.
The word $w$ is \emph{primitive} if it cannot be written in the form $w = u^{k}$ for a word $u\in \calA^{+}$ and an integer $k\geq2$.
If $w = uv$, then the word $vu$ is called a \emph{conjugate} of~$w$. The set of all conjugates of $w$ is called the \emph{conjugacy class} of $w$.

\begin{lemma}[Castelli, Mignosi, and Restivo~\cite{CasMigRes1999}]\label{080720121124}
Let $w \in \calA^{+}$ be a word with periods $p,q$.
\begin{enumerate}[(i)]
 \item If  $q < p \leq \nabs{w}$, then the prefix and suffix of $w$ of length $\nabs{w} - q$
have periods $q$ and $p-q$.
\item Let $u$ and $v$ be the prefix and suffix of $w$ of length $q$, respectively. Then $uw$ and $wv$ have periods $q$ and $p+q$. %\label{030720122038}
\end{enumerate}
\end{lemma}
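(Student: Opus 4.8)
The plan is to reduce everything to elementary index bookkeeping, organized around the characterization of periods recalled above: $w = a_{1}a_{2}\cdots a_{n}$ has period $p$ exactly when $a_{i+p} = a_{i}$ for every admissible $i$, equivalently (when $p \le \nabs{w}$) when $w$ is a periodic extension of its length-$p$ prefix. No deep idea is needed; the entire difficulty lies in keeping the index ranges straight so that each invocation of a period is legitimate, and in remembering the convention that every integer $\ge \nabs{w}$ counts as a period (which makes the assertions vacuously true whenever the claimed period is at least the length of the word in question).

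For part~(i), the period-$q$ assertions are immediate: the prefix and the suffix of $w$ of length $\nabs{w}-q$ are factors of $w$, and for such a factor the condition defining ``period $q$'' is either vacuous or a sub-collection of the conditions already known to hold for $w$. For the period-$(p-q)$ assertion about the prefix $r = a_{1}\cdots a_{n-q}$, fix $i$ with $1 \le i \le (n-q)-(p-q) = n-p$; then $a_{i} = a_{i+p}$ by the period $p$ of $w$ (legal since $i \le n-p$) and $a_{i+p} = a_{i+p-q}$ by the period $q$ of $w$ (legal since $i+p-q \le n-q$, i.e.\ again $i \le n-p$), so $a_{i} = a_{i+(p-q)}$, which is exactly what ``period $p-q$'' of $r$ requires. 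The claim for the suffix of length $\nabs{w}-q$ is the mirror image, running the same two-step chain from the right; alternatively it follows from the prefix case applied to $w^{R}$, which has the same periods as $w$. The hypothesis $q < p \le \nabs{w}$ forces $\nabs{w}-q \ge p-q \ge 1$, so no degenerate case arises here.

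For part~(ii), I would first handle the period-$q$ claim conceptually: since $w$ has period $q \le \nabs{w}$ it is a periodic extension of its length-$q$ prefix $u$, hence so is $uw$, and therefore $uw$ has period $q = \nabs{u}$; the statement for $wv$ follows by the symmetric argument on the right (or by reversal). For the period-$(p+q)$ claim, write $t = uw = b_{1}\cdots b_{n+q}$, so that $b_{i} = a_{i}$ for $i \le q$ and $b_{i} = a_{i-q}$ for $i > q$, using that $u$ is the length-$q$ prefix of $w$. Fix $i$ with $1 \le i \le (n+q)-(p+q) = n-p$. Because $i+p+q > q$ we have $b_{i+p+q} = a_{i+p}$, a legal index since $i \le n-p$. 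If $i \le q$, then $b_{i} = a_{i} = a_{i+p}$ by the period $p$ of $w$; if $i > q$, then $b_{i} = a_{i-q} = a_{i}$ by the period $q$ of $w$ and $a_{i} = a_{i+p}$ by the period $p$. In both cases $b_{i} = b_{i+p+q}$, so $uw$ has period $p+q$, and $wv$ is symmetric.

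The step I expect to be the only real pitfall is the boundary arithmetic: one must verify that every use of ``period $p$'' or ``period $q$'' occurs strictly within its valid range of indices, and must not overlook the cases rendered trivial by the length convention above. Routing the period-$q$ parts through the periodic-extension picture removes most of this burden, and for the period-$(p-q)$ and period-$(p+q)$ parts the chains are short enough that the range checks — all of which collapse to the single inequality $i \le \nabs{w}-p$ — are transparent.
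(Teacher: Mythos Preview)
Your argument is correct. The paper itself does not supply a proof of this lemma: it is quoted from Castelli, Mignosi, and Restivo~\cite{CasMigRes1999}, with only the remark that the source treats $uw$ in part~(ii) and that the $wv$ case ``can be proved similarly.'' Your index-chasing verification is exactly the kind of elementary argument that underlies the cited result, and your appeal to reversal to obtain the suffix and $wv$ statements from the prefix and $uw$ statements is precisely in the spirit of the paper's remark. There is nothing further to compare.
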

In the property~(ii) in the previous lemma, the indicated source~\cite{CasMigRes1999} only mentions and proves the claim for the periods of $uw$,
but the case for the periods of $wv$ can be proved similarly.

\begin{lemma}[Fine and Wilf~\cite{FinWil1965}]\label{020720122145}
If a word $w\in\calA^{+}$ has periods $p$ and $q$, and 
\[
p + q - \text{gcd}(p,q) \leq \nabs{w},
\]
then $w$ has period $\text{gcd}(p,q)$. 
\end{lemma}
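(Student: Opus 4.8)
The plan is to prove the lemma by strong induction on $\nabs{w}$, using part~(i) of Lemma~\ref{080720121124} for the inductive step. Write $d=\gcd(p,q)$ and assume without loss of generality that $q\le p$. If $p=q$ there is nothing to prove, since then $p=q=d$, so assume $q<p$. Because $d$ divides both $p$ and $q$ while $p\neq q$, the difference $p-q$ is a positive multiple of $d$, so $p\ge q+d$; combined with the hypothesis $\nabs{w}\ge p+q-d$ this gives both $\nabs{w}\ge p$ (as $d\le q$) and $\nabs{w}\ge 2q$. Thus we are in the situation $q<p\le\nabs{w}$ to which Lemma~\ref{080720121124}(i) applies, with the extra inequality $\nabs{w}\ge 2q$ in hand.

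For the inductive step, let $w'$ be the prefix of $w$ of length $\nabs{w}-q$. By Lemma~\ref{080720121124}(i) it has periods $q$ and $p-q$, and the hypothesis of the present lemma holds for $w'$ with this pair of periods: indeed $q+(p-q)-\gcd(q,p-q)=p-d$, and $p-d\le\nabs{w}-q=\nabs{w'}$ is just a rearrangement of $p+q-d\le\nabs{w}$. Since $\nabs{w'}<\nabs{w}$, the induction hypothesis gives that $w'$ has period $\gcd(q,p-q)=d$. (The recursion terminates because it runs the Euclidean algorithm on $(p,q)$ and bottoms out at the trivial case of two equal periods.)

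It remains to lift the period $d$ from $w'$ to all of $w$, which is the step I would be most careful about. Let $u$ and $z$ be the prefixes of $w$ of length $q$ and $d$ respectively. Since $w$ has period $q$ with $q\le\nabs{w}$, the word $w$ is a prefix of $u^{\omega}$. Since $\nabs{w}\ge 2q$, the block $u$ is itself a prefix of $w'$; as $w'$ has period $d$ and $d\mid q=\nabs{u}$, this forces $u=z^{q/d}$, whence $u^{\omega}=z^{\omega}$. Consequently $w$ is a prefix of $z^{\omega}$, i.e.\ $w$ has period $\nabs{z}=d$, which closes the induction. The only features of the hypothesis actually used are the transfer inequality $p-d\le\nabs{w'}$ above and the inequality $\nabs{w}\ge 2q$ here, and both follow from $\nabs{w}\ge p+q-d$ together with $p\ge q+d$; so the bound $p+q-\gcd(p,q)\le\nabs{w}$ is exactly what the argument requires. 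One could instead first reduce to the coprime case by splitting $w$ into the $d$ arithmetic subsequences of positions congruent modulo $d$ — each inherits periods $p/d$ and $q/d$, and a short computation checks the length bound — but keeping $d$ general costs nothing here.
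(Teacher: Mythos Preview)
Your proof is correct. The paper does not include its own proof of this lemma; it is stated there as the classical theorem of Fine and Wilf with a citation to~\cite{FinWil1965} and nothing more, so there is no in-paper argument to compare against.

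For what it is worth, the route you take --- strong induction on $\nabs{w}$, using Lemma~\ref{080720121124}(i) to pass to the prefix $w'$ of length $\nabs{w}-q$ with periods $q$ and $p-q$, then lifting the period $d$ from $w'$ back to $w$ via $\nabs{w}\ge 2q$ --- is one of the standard modern proofs and fits cleanly within the paper's own toolkit. The only step worth a second look is the lifting, and you handle it correctly: $\nabs{w'}=\nabs{w}-q\ge q$ ensures the period-$q$ block $u$ sits inside $w'$, whence $u=z^{q/d}$ and $w$ is a prefix of $z^{\omega}$. The parenthetical about the Euclidean algorithm is a nice structural remark but is not needed for termination, which already follows from $\nabs{w'}<\nabs{w}$.
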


It is well-known that the above lemma is optimal. That is to say, 
if neither of $p$ and $q$ equals $\gcd(p,q)$, then there exists a word $z\in\calA^{+}$ of length $\nabs{z} = p + q - \text{gcd}(p,q) - 1$ 
that has periods $p$ and $q$, but does not have a period $\text{gcd}(p,q)$.
Following~\cite{Lothaire2002},  we call a word $z \in \{\wa, \wb\}^{*}$ a \emph{central word over $\{\wa, \wb\}$} 
if there exist two coprime integers $p,q$ such that $\nabs{z} = p + q - 2$ and both $p$ and $q$ are periods of~$z$.
Equivalently, $z$ is a central word over $\{\wa, \wb\}$ if either $z \in \wa^{*} \cup \wb^{*}$ or there exist two coprime integers $p,q \geq 2$ such that $\nabs{z} = p + q - 2$, both $p$ and $q$ are periods of $z$, 
but $z$ does not have period $\gcd(p,q) = 1$, that is, both letters $\wa$ and $\wb$ occur in~$z$.
These words are also known as extremal Fine and Wilf words~\cite{TijZam2009}.
They are palindromes and unique up to renaming letters~\cite{Lothaire2002,TijZam2009}.
The latter fact implies that there are exactly two central words over $\{\wa,\wb\}$ with given periods $p$ and $q$; if one is $z$, then the other one 
is~$c(z)$, where $c$ is the morphism $\wa \mapsto \wb$, $\wb \mapsto \wa$.

Recall that the Fibonacci numbers are defined recursively as $F_{0} = 0$,  $F_{1} = 1$, and $F_{n} = F_{n-1} + F_{n-2}$  for $n\geq2$,
and that every two consecutive Fibonacci numbers are coprime. The two central words over $\{\wa,\wb\}$ for periods $F_{n-2}$ and $F_{n-1}$ can be
obtained  by means of finite Fibonacci words as follows. Let $\wa, \wb\in \calA$ be distinct letters. Then define $f_{1} = \wb$,  $f_{2} = \wa$, and
\[
f_{n} = f_{n-1} f_{n-2} \quad (n\geq 3).
\]
We call the words $f_{n}$ \emph{finite Fibonacci words over} $\{ \wa, \wb\}$. Note that we could also have defined $f_{1} = \wa$ and $f_{2} = \wb$.
This makes a difference in our considerations, and we will make this distinction when it matters.
Let $p_{n}$ denote the word for which $f_{n} = p_{n} xy$ with $xy \in \nsset{\wa\wb, \wb\wa}$ and $n\geq3$. 
Thus 
\begin{align*}
f_{3} &= \word{ab} &  f_{4} &= \word{aba} & f_{5} &= \word{abaab}   & f_{6} &= \word{abaabaab} & f_{7} &= \word{abaababaabaab} \\
p_{3} &= \epsilon &  p_{4} &= \word{a} & p_{5} &= \word{aba}  &      p_{6} &= \word{abaaba} &      p_{7} &= \word{abaababaaba}
\end{align*}
Note that $\nabs{f_{n}} = F_{n}$ and thus $\nabs{p_{n}} = F_{n} - 2$. It can be shown that 
if $n\geq 5$, then $p_{n}$ has periods $F_{n-2}$ and $F_{n-1}$, but it does not have period $\gcd(F_{n-2}, F_{n-1}) = 1$.
Thus each $p_{n}$ is a central word over $\{\wa,\wb\}$. The other central word with the same periods is  
$c(p_{n})$, where $c$ is the morphism $\wa \mapsto \wb$ and $\wb \mapsto \wa$.
We will give some further properties of the Fibonacci words in Lemmas~\ref{ma 16-07-2012:1521} and~\ref{ma 16-07-2012:1209}, but let us stress here that a word over $\{\wa, \wb\}$ that is  of length $F_n -2\geq 2$ is one of $p_n$ or $c(p_n)$ if and only if it has periods $F_{n-1}$ and $F_{n-2}$ but does not have period~1.

The order $<$ of $\calA$ is extended to $\calA^{*}$ as follows: For  $u,v \in \calA^{*}$ we have
\[
 u < v \qIff
\begin{cases}
 \text{$u$ is a prefix of $v$,  or} \\
 \text{$u=x\wa u'$ and $v=x\wb v'$ with $x, u', v' \in \calA^{*}$,  $\wa,\wb \in \calA$ and  $\wa < \wb$.} 
\end{cases}
\]
This is called the \emph{lexicographic ordering} of $\calA^{*}$ with respect to $<$.

A nonempty, primitive word $w\in \calA^{+}$  is called a \emph{Lyndon word}  if it is the smallest word in its conjugacy class. In particular, letters are Lyndon words, but the empty word is not. For example, the Lyndon words $w \in \bina^{+}$  with $\nabs{w} \leq 4$ are
\[
\o, \quad \i, \quad \o\i, \quad \o\o\i, \quad \o\i\i, \quad \o\o\o\i, \quad \o\o\i\i, \quad \o\i\i\i.
\]
\begin{lemma}[Berstel and de Luca~\cite{BerdeL1997}]\label{140720121241}
If $z \in\calA^{*}$ is a central word over $\{\wa,\wb\}$ and $\wa < \wb$, then $\wa z \wb$ is a Lyndon word.
\end{lemma}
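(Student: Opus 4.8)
The plan is to argue by induction on $\nabs{z}$, combining the recursive structure of central words with the classical fact that the concatenation $\ell_{1}\ell_{2}$ of two Lyndon words $\ell_{1},\ell_{2}$ with $\ell_{1} < \ell_{2}$ is again a Lyndon word (see~\cite{Lothaire1997}). For the base of the induction I take $z \in \wa^{*} \cup \wb^{*}$, which includes $z = \epsilon$; then $\wa z\wb$ equals $\wa^{k+1}\wb$ or $\wa\wb^{k+1}$, and a direct inspection of its conjugates shows it is the least among them, hence a Lyndon word.

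For the inductive step, suppose $z$ contains both letters, so that $\nabs{z} \geq 2$. Here I would use the well-known recursive decomposition of central words (see, e.g.,~\cite{Lothaire2002}): there exist central words $u, v$ and distinct letters $x, y$ with
\[
z = u\,xy\,v = v\,yx\,u, \qquad \gcd(\nabs{u} + 2,\, \nabs{v} + 2) = 1 .
\]
Coprimality excludes $\nabs{u} = \nabs{v}$, so we may assume $\nabs{u} < \nabs{v}$. Since $u$ and $v$ are both prefixes of $z$, $u$ is a prefix of $v$; moreover the $(\nabs{u}+1)$-th letter of $v$ (defined because $\nabs{u} < \nabs{v}$) equals the $(\nabs{u}+1)$-th letter of $z$, which is $x$. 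If $x = \wa$, and hence $y = \wb$, then the second expression for $z$ gives $\wa z\wb = (\wa v\wb)(\wa u\wb)$: the words $\wa v\wb$ and $\wa u\wb$ agree on the common prefix $\wa u$ and then differ, the next letter of $\wa v\wb$ being $\wa$ and that of $\wa u\wb$ being $\wb$, so $\wa v\wb < \wa u\wb$. If instead $x = \wb$, hence $y = \wa$, then the first expression gives $\wa z\wb = (\wa u\wb)(\wa v\wb)$: here $\wa u\wb$ and $\wa v\wb$ agree on $\wa u$ and then again on the letter $\wb$, so $\wa u\wb$, being the shorter word, is a proper prefix of $\wa v\wb$, whence $\wa u\wb < \wa v\wb$. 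By the induction hypothesis $\wa u\wb$ and $\wa v\wb$ are Lyndon words, so in both cases $\wa z\wb$ is a product of two Lyndon words in strictly increasing order and is therefore a Lyndon word, which closes the induction.

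The one delicate point is the bookkeeping around the recursive decomposition: one must track which of $u$ and $v$ lands on the left of the factorisation of $\wa z\wb$ and confirm that the lexicographic comparison of $\wa u\wb$ and $\wa v\wb$ runs in the direction the concatenation lemma requires. A self-contained treatment would instead devote its effort to re-deriving that decomposition from the Fine and Wilf property (Lemma~\ref{020720122145}) and the description of the extremal Fine--Wilf words recalled earlier; the inductive argument itself is short.
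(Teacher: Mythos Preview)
The paper does not give its own proof of this lemma: it is stated with attribution to Berstel and de~Luca~\cite{BerdeL1997} and used as a black box. There is therefore nothing to compare your argument against in the paper itself.

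Your proof is correct. The base case is immediate, and in the inductive step you correctly invoke the recursive decomposition of central words (as in~\cite{Lothaire2002}): every central word $z \notin \wa^{*}\cup\wb^{*}$ satisfies $z = u\,xy\,v = v\,yx\,u$ with $u,v$ central and $\{x,y\}=\{\wa,\wb\}$. Your two cases according to whether $x=\wa$ or $x=\wb$ are handled cleanly; in each one the factorisation $\wa z\wb = (\wa v\wb)(\wa u\wb)$ or $(\wa u\wb)(\wa v\wb)$ places the two shorter Lyndon words in strictly increasing lexicographic order, and Lemma~\ref{070820102200}(iii) finishes. The bookkeeping you flag as ``delicate'' is in fact carried out correctly: the key observation that the $(\nabs{u}+1)$-th letter of $v$ equals $x$ is exactly what pins down the comparison in both cases.

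The only dependency you import beyond the paper is the recursive decomposition of central words itself; as you note, this can be derived from Fine and Wilf (Lemma~\ref{020720122145}) together with the palindromicity and uniqueness facts about extremal Fine--Wilf words already recalled in the preliminaries, so the argument can be made fully self-contained within the paper's framework if desired.
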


According to Lemma~\ref{140720121241}, the words $\wa p_{n} \wb $ and $\wa c(p_{n}) \wb$ are Lyndon words; we call them 
\emph{Fibonacci Lyndon words of length~$F_{n}$ over $\{\wa, \wb\}$}. 
The first few ones are
\begin{align*}
\word{ab} && \word{aab} && \word{aabab} && \word{aabaabab} && \word{aabaababaabab} \\
\word{ab} && \word{abb} && \word{ababb} && \word{ababbabb} && \word{ababbababbabb}
\end{align*}
Here the upper row corresponds to words $\word{a}p_{n}\word{b}$ and the lower row to words $\word{a}c(p_{n})\word{b}$.

The properties of Lyndon words given in the next lemma are well-known~\cite{Lothaire1997, Duval1983}.
\begin{lemma}\label{070820102200}
Lyndon words have the following properties.
\begin{enumerate}[(i)]
\item Lyndon words are unbordered.
\item A word $w$ is Lyndon if and only if $w < y$ for all nonempty proper suffixes $y$ of $w$.
\item If $u$ and $v$ are Lyndon words and $u<v$, then $uv$ is a Lyndon word.
\end{enumerate} 
\end{lemma}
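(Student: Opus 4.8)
These three facts are classical; I would organise the proof around the suffix characterisation~(ii), proving (i) alongside it by induction on length, and deducing (iii) from~(ii).

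\emph{Property (ii).} The ``if'' direction is a direct computation. Suppose $w < y$ for every proper nonempty suffix $y$ of $w$. Then $w$ is primitive: were $w = t^{k}$ with $k\geq 2$, the word $t^{k-1}$ would be both a proper nonempty prefix of $w$, hence $< w$, and a proper nonempty suffix, hence $> w$. And for any factorisation $w = uv$ with $u,v$ nonempty we have $w < v$; since $\nabs v < \nabs w$ this cannot be a prefix relation, so it is a strict mismatch $w = x\wa\cdots$, $v = x\wb\cdots$ with $\wa<\wb$, and as $vu$ also begins with $x\wb$ this gives $w < vu$. Hence $w$ is the least element of its conjugacy class. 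For the ``only if'' direction, let $w = a_{1}\cdots a_{n}$ be Lyndon and $y = a_{i}\cdots a_{n}$ a proper nonempty suffix. The rotation $r = a_{i}\cdots a_{n}a_{1}\cdots a_{i-1}$ is a conjugate of $w$ distinct from $w$ by primitivity, so $w < r$; if $w<y$ failed then $y<w$, and a mismatch witnessing $y<w$ would, since $r$ begins with $y$, force $r<w$, a contradiction, so $y$ would have to be a prefix of $w$ as well as a suffix, i.e.\ a proper nonempty border of $w$, which is impossible by~(i).

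\emph{Property (i).} It remains to prove that a Lyndon word $w$ is unbordered, and I would do this by induction on $\nabs w$, carried out simultaneously with the ``only if'' direction of~(ii) above so that no circularity arises: within the inductive step one first settles (i) for $w$, then~(ii) for $w$ using it. The base case $\nabs w = 1$ is vacuous. For $\nabs w\geq 2$, write $w = uv$ with $u,v$ shorter Lyndon words (the factorisation property of~\cite{CheFoxLyn1958}); by the induction hypothesis $u$ and $v$ are unbordered and each is smaller than its proper nonempty suffixes. One then shows that a putative proper nonempty border $x$ of $w$ cannot exist, by a finite case analysis on $\nabs x$ relative to $\nabs u$ and $\nabs v$: according as $x$ lies inside $v$, inside $u$, or straddles the cut between them, one extracts either a proper nonempty border of $u$ or of $v$ (contradicting the induction hypothesis) or an impossible chain of lexicographic inequalities (using that $u$ and $v$, being shorter Lyndon words, are already known to be smaller than their proper nonempty suffixes). \textbf{This bookkeeping is the main obstacle}; the overlap relations of Lemmas~\ref{080720121124} and~\ref{020720122145} are the natural tools for it, whereas everything else in the proof is essentially formal.

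\emph{Property (iii).} Given Lyndon words $u<v$, I would verify by~(ii) that $uv$ is Lyndon by checking $uv < s$ for every proper nonempty suffix $s$ of $uv$. Such an $s$ is either $u_{1}v$ with $u_{1}$ a proper nonempty suffix of $u$, or a nonempty suffix $v_{1}$ of $v$. In the first case, (ii) applied to $u$ gives $u<u_{1}$, which — as $\nabs{u_{1}}<\nabs u$ — is a strict mismatch, and this mismatch survives into $uv<u_{1}v$. In the second case one first notes $u\neq v_{1}$: if $v_{1}=v$ this is the hypothesis, and if $v_{1}$ is a proper suffix of $v$ then $v<v_{1}$ by~(ii), whence $u<v<v_{1}$. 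Hence, if $u$ is a prefix of $v_{1}$ it is a proper one, say $v_{1}=uv_{1}'$ with $v_{1}'$ a proper nonempty suffix of $v$, and then $uv<uv_{1}'=v_{1}$ reduces to $v<v_{1}'$, which holds by~(ii); and if $u$ is not a prefix of $v_{1}$, then $u<v_{1}$ (again via $u<v\leq v_{1}$) is a mismatch and gives $uv<v_{1}$ directly. In every case $uv<s$, so $uv$ is Lyndon. This part is routine once~(ii) is in hand.
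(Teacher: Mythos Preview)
The paper does not actually prove this lemma: it is stated as ``well-known'' with citations to \cite{Lothaire1997,Duval1983}, so there is no in-paper argument to compare against. Your handling of the ``if'' direction of~(ii), of the ``only if'' direction of~(ii) modulo~(i), and of~(iii) is correct and essentially the textbook route.

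The weak point is your plan for~(i). You import the factorisation $w=uv$ into shorter Lyndon words from~\cite{CheFoxLyn1958}, but in essentially every standard presentation---including the paper's own Lemma~\ref{pe 15-06-2012:0726}, whose proof \emph{uses} the very lemma you are trying to establish---that factorisation is \emph{derived from} (i) and~(ii). So you are in danger of a genuine circularity unless you can point to an independent proof of the factorisation that does not already rely on unborderedness or the suffix characterisation. You then leave the heart of your argument, the case analysis showing a putative border is impossible, entirely unwritten, flag it as ``the main obstacle,'' and propose Fine--Wilf and Lemma~\ref{080720121124} as tools, which is serious overkill for this statement.

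In fact~(i) has a three-line direct proof from the definition, with no induction, no factorisation, and no periodicity lemmas: if $w$ is Lyndon with a proper nonempty border $b$, write $w = bx = yb$ (so $\nabs{x}=\nabs{y}>0$); rotating $w=yb$ gives the nontrivial conjugate $by$, whence $bx=w<by$ and hence $x<y$; rotating $w=bx$ gives the nontrivial conjugate $xb$, whence $yb=w<xb$ and hence $y<x$; contradiction. Replacing your inductive scheme for~(i) by this argument removes the circularity worry and makes what you called the ``main obstacle'' disappear entirely; your proofs of~(ii) and~(iii) then go through exactly as you wrote them.
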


\begin{lemma}\label{100620121631}
Let $w\in\calA^{+}$ be a Lyndon word. Suppose that $z\wa$ is a periodic extension of $w$ for some $z \in \calA^{+}$ and $\wa, \wb \in \calA$ with 
$\wa<\wb$. Then $z\wb$ is a Lyndon word.
\end{lemma}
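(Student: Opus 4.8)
The plan is to verify the suffix characterization of Lyndon words in Lemma~\ref{070820102200}(ii): it is enough to prove that $z\wb < y$ for every nonempty proper suffix $y$ of $z\wb$. Every such $y$ has the form $s\wb$ where $s$ is a proper suffix of $z$ (the case $s=\epsilon$ included), so the whole statement reduces to showing $z\wb < s\wb$ for each proper suffix $s$ of $z$. Since $z\wa$ is a periodic extension of $w$, it is a prefix of $w^{m}$ for every sufficiently large $m$; fix one such $m$. Then $z$ and $z\wa$ are prefixes of $w^{m}$, and the letter of $w^{m}$ in position $\nabs{z\wa}$ is $\wa$.

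I would first dispose of $s=\epsilon$, where the claim is $z\wb<\wb$. The letter $\wa$ occurs in $z\wa$, hence in $w$; since $w$ is Lyndon its first letter is the least letter occurring in it, so it is $\le\wa<\wb$. As $z\in\calA^{+}$ is a prefix of $w^{m}$, the word $z\wb$ begins with that first letter, which is $<\wb$; therefore $z\wb<\wb$.

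Now fix a nonempty proper suffix $s$ of $z$, occupying positions $i,i+1,\dots,\nabs{z}$ of $z$ with $i\ge 2$. The key is to locate $s\wa$ inside $w^{m}$: it is exactly the block in positions $i,\dots,\nabs{z\wa}$, so $s\wa$ is a prefix of the suffix $\sigma$ of $w^{m}$ that starts at position $i$, and moreover the $(\nabs{s}+1)$-st letter of $\sigma$ is $\wa$. The structural input is that $w$ is Lyndon. If $i\equiv 1\pmod{\nabs{w}}$, then $\sigma$ begins with a full copy of $w$, and in fact $s\wa$ is then also a prefix of $z\wa$; comparing $z\wb$ with $s\wb$ letterwise, this forces the letter of $z$ in position $\nabs{s}+1$ to be $\wa$, and hence $z\wb<s\wb$. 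Otherwise $\sigma$ begins with a nonempty proper suffix $t$ of $w$ (followed by further copies of $w$), and Lyndon-ness gives $w<t$, so $w^{m}$ and $\sigma$ first disagree at some position $d$ with $d\le\nabs{t}<\nabs{w}$, the smaller letter lying on the $w^{m}$ side. Reading off the $j$-th letters of $z\wb$ and $s\wb$ --- using that $z$ is a prefix of $w^{m}$, that $s$ is a prefix of $\sigma$, and the two recorded facts that position $\nabs{z\wa}$ of $w^{m}$ carries $\wa$ and position $\nabs{s}+1$ of $\sigma$ carries $\wa$ --- one checks in each of the cases $d\le\nabs{s}$, $d=\nabs{s}+1$, $d>\nabs{s}+1$ that $z\wb$ already carries a letter $<\wb$ at a position where $s\wb$ carries a strictly larger letter. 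Hence $z\wb<s\wb$ in all cases, and Lemma~\ref{070820102200}(ii) concludes that $z\wb$ is a Lyndon word; note that this characterization automatically forces primitivity, so nothing further is needed.

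The case analysis in the previous paragraph is routine bookkeeping. The single load-bearing observation, which I would isolate as a small claim before the case split, is that \emph{for $i\not\equiv 1\pmod{\nabs{w}}$ the suffix of $w^{m}$ starting at position $i$ is lexicographically larger than $w^{m}$, with the first disagreement occurring within the first $\nabs{w}$ letters}; this is precisely the point where the primitivity and minimality of the Lyndon word $w$ are used, through the inequality $w<t$ for proper suffixes $t$ of $w$. Everything else is just tracking which of $\nabs{s}$, $\nabs{s}+1$ the mismatch position $d$ relates to.
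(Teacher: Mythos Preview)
Your argument is correct. The case analysis you sketch at the end does go through: when $d\le\nabs{s}$ the mismatch between $w^{m}$ and $\sigma$ already occurs inside $z$ versus $s$; when $d=\nabs{s}+1$ you use that $\sigma$ carries $\wa$ there so $w^{m}$ carries something strictly smaller than $\wa<\wb$; and when $d>\nabs{s}+1$ the agreement of $w^{m}$ and $\sigma$ at position $\nabs{s}+1$ transports the letter $\wa$ from $\sigma$ to $z$, beating the $\wb$ at the end of $s\wb$.

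Your route is genuinely different from the paper's. The paper does not verify the suffix criterion for $z\wb$ directly. Instead it writes $z=w^{n}x$ with $x\wa$ a prefix of $w$, first proves that the short word $x\wb$ is Lyndon (arguing by contradiction: a would-be small suffix $v=v'\wb$ of $x\wb$ would force the suffix $v'\wa y$ of $w$ to be smaller than $w$), and then invokes the concatenation closure property Lemma~\ref{070820102200}(iii) together with $w<x\wb$ to conclude that $z\wb=w^{n}x\wb$ is Lyndon. So the paper localizes the problem to the last partial block and then lifts back up using (iii), whereas you work globally in $w^{m}$ and compare positions. The paper's approach is shorter and more structural, avoiding any case split on the size of the mismatch position; your approach is more elementary in that it never appeals to (iii), at the cost of the three-way bookkeeping.
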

\begin{proof}
There exist an integer $n\geq 0$ and words $x,y \in\calA^{*}$ such that $w = x\wa y$ and $z = w^{n}x$. 
We show that $x\wb$ is a Lyndon word; this suffices because then Lemma~\ref{070820102200} implies that $z\wb = w^{n}x\wb$ is a Lyndon word since $w < x\wb$.

Contrary to what we want to show, suppose that $x\wb$ is not a Lyndon word.  Then Lemma~\ref{070820102200} implies that $x\wb$ has a nonempty suffix $v$
such that $v < x\wb$. Write $v = v' \wb$. Then $v' \wa y$ is a suffix of $w$, so that $v' \wa y > w$ because $w$ is a Lyndon word. Therefore $\wb > \wa$ implies that $v = v'\wb$ is not a prefix of $w$ and thus not a prefix of~$x$.
Consequently $v < x\wb$ implies that we can write $v = t \texttt{c} t'$ and $x\wb = t \texttt{d} t''$ for some words $t, t', t''\in \calA^{*}$ and letters $\texttt{c},\texttt{d}\in \calA$ with $\texttt{c} < \texttt{d}$.
Since $\nabs{t\texttt{d}} = \nabs{t \texttt{c}}  \leq \nabs{v} < \nabs{x\wb}$, the word $t\texttt{d}$ is a prefix of $x$ and thus a prefix of~$w$. But then
\[
v'\wa y < v'\wb y = vy = t\texttt{c}t' y < t\texttt{d} < w,  
\]
a contradiction. Thus $x\wb$ is a Lyndon word, and the proof is complete.
\end{proof}

\begin{lemma}\label{pe 15-06-2012:0726}
Let $w\in \calA^{+}$ be a Lyndon word with $\nabs{w} \geq 2$. Let $\lambda_{w}$ be the longest proper prefix of~$w$ that is also a Lyndon word.
Then $w^{\flat}$ is a periodic extension of $\lambda_{w}$. Furthermore, the word $\mu_{w} := \lambda_{w}^{-1}w$ is a Lyndon word.
\end{lemma}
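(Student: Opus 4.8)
The plan is to prove the two assertions separately, in both cases using Lemma~\ref{100620121631} to manufacture Lyndon words out of periodic extensions. Write $w = a_{1} a_{2} \cdots a_{n}$ with $n = \nabs{w} \geq 2$, and set $\lambda = \lambda_{w}$ and $\ell = \nabs{\lambda}$. First note that $\lambda$ is well-defined with $1 \leq \ell \leq n-1$: the letter $a_{1}$ is itself a Lyndon word and a proper prefix of $w$, so the longest Lyndon proper prefix exists and has length strictly less than $n$. For the first assertion it suffices to show that $w^{\flat}$ has period $\ell$, since then, as $\ell \leq \nabs{w^{\flat}}$ and the length-$\ell$ prefix of $w^{\flat}$ is $\lambda$, the word $w^{\flat}$ is automatically a prefix of some power $\lambda^{k}$.

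To get this period I would let $j$ be the length of the \emph{longest} prefix of $w$ having period $\ell$, so $\ell \leq j \leq n$. Since Lyndon words are unbordered (Lemma~\ref{070820102200}(i)) and $\ell < n$, the word $w$ itself has no period $\ell$, hence $j \leq n-1$; the task is to exclude $j \leq n-2$. Under that assumption the prefix of length $j$ is $\lambda^{s} p$ for some $s \geq 1$ and some proper prefix $p$ of $\lambda$; writing $\lambda = p\,b\,\lambda'$ with $b$ a letter, reduction of indices modulo $\ell$ gives $a_{j+1-\ell} = b$, and maximality of $j$ forces $a_{j+1} \neq b$. If $a_{j+1} < b$, then the proper suffix $p\,a_{j+1}\cdots a_{n}$ of $w$ (the tail after the block $\lambda^{s}$) is lexicographically smaller than $w$, because the two agree on $p$ and then $a_{j+1} < b$; this contradicts that $w$ is Lyndon. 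If $a_{j+1} > b$, then $\lambda^{s} p\, b$ is a prefix of $\lambda^{s+1}$, hence a periodic extension of the Lyndon word $\lambda$, so Lemma~\ref{100620121631} (with $b < a_{j+1}$) shows that $\lambda^{s} p\, a_{j+1}$ is a Lyndon word; but that is a proper prefix of $w$ strictly longer than $\lambda$, contradicting the maximality of $\lambda_{w}$. Hence $j = n-1$, so $w^{\flat}$ has period $\ell$.

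For the second assertion, now that $w^{\flat}$ has period $\ell$ I would write $w^{\flat} = \lambda^{s} p$ with $p$ a proper prefix of $\lambda$, so that $w = \lambda^{s} p\, a_{n}$ and $\mu_{w} = \lambda^{s-1} p\, a_{n}$ with $\mu_{w}^{\flat} = \lambda^{s-1} p$; keep $\lambda = p\,b\,\lambda'$ as above. If $\mu_{w}^{\flat}$ is empty, equivalently $\ell = n-1$, then $\mu_{w}$ is a single letter and hence Lyndon. Otherwise $\mu_{w}^{\flat} \in \calA^{+}$. First I would observe that $a_{n} > b$: the suffix $p\,a_{n}$ of $w$ is proper, so $w < p\,a_{n}$, and since $w$ begins with $p\,b$ this can only happen if $b < a_{n}$. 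Then $\mu_{w}^{\flat} b = \lambda^{s-1} p\, b$ is a prefix of $\lambda^{s}$, hence a periodic extension of $\lambda$, and Lemma~\ref{100620121631} (with $b < a_{n}$) yields that $\mu_{w}^{\flat} a_{n} = \mu_{w}$ is a Lyndon word.

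The step I expect to be the main obstacle is the dichotomy in the contradiction argument for the period. Both cases rely on the decomposition $\lambda^{s} p$ of the maximal period-$\ell$ prefix and on correctly identifying, by reduction modulo $\ell$, the letter $b$ that ``should'' occupy position $j+1$; and the point to get right is that a downward violation $a_{j+1} < b$ conflicts with $w$ being Lyndon, whereas an upward violation $a_{j+1} > b$ conflicts instead with the \emph{maximality} of $\lambda_{w}$, via Lemma~\ref{100620121631}. It is also worth stressing that unborderedness of Lyndon words is precisely what pins down $j \leq n-1$, that is, what rules out $w$ itself having period $\ell$.
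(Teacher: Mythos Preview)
Your proof is correct and follows essentially the same approach as the paper's: argue by contradiction that a first mismatch with the $\lambda_w$-periodic structure either produces a too-small proper suffix (violating the Lyndon property) or, via Lemma~\ref{100620121631}, a too-long Lyndon proper prefix (violating maximality of $\lambda_w$); then use the resulting periodic form together with Lemma~\ref{100620121631} again to see that $\mu_w$ is Lyndon. Your version is slightly more explicit about indices and edge cases (separating out the case where $\mu_w$ is a single letter), but the argument is otherwise identical to the paper's.
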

\begin{proof}
Suppose that $w^{\flat}$ is not a periodic extension of $\lambda_{w}$. 
Then there exist a word $u$ and different letters $\wa, \wb$ such that $u\wa$ is a prefix of $\lambda_{w}$ and $\lambda_{w}^{k}u\wb$ is a prefix of $w^{\flat}$ for some integer~$k\geq 1$.  
If $\wa < \wb$, then  $\lambda_{w}^{k}u\wb$ is a Lyndon word by Lemma~\ref{100620121631}, contradicting the maximality of $\nabs{\lambda_{w}}$.
If $\wa > \wb$, then $(\lambda_{w}^{k})^{-1}w < w$ because $(\lambda_{w}^{k})^{-1}w$ begins with $u\wb$ while $w$ begins with $u\wa$.
Thus $w$ has a nonempty suffix that is smaller than $w$, contradicting Lemma~\ref{070820102200} because $w$ is Lyndon.
Therefore $w^{\flat}$ is a periodic extension of $\lambda_{w}$,
and consequently there exist $u\in\calA^{+}$ and letters $\wa, \wb \in \calA$ such that $u\wa$ is a prefix of $\lambda_{w}$ and $w = \lambda_{w}^{k}u\wb$ for some integer~$k\geq 1$.  Furthermore, we must have $\wa <\wb$ because otherwise $u \wb < w$, which is impossible because 
$w$ is a Lyndon word and $u \wb$ its suffix.
Now $\mu_{w} = \lambda_{w}^{-1} w = \lambda_{w}^{k-1}u\wb$ is a Lyndon word by Lemma~\ref{100620121631}.
\end{proof}

Due to its importance in the upcoming considerations, let us restate Lemma~\ref{pe 15-06-2012:0726}:  every non-letter Lyndon word $w\in \calA^{+}$ can be written as $w 
= \lambda_{w}\mu_{w}$, where $\lambda_{w}$ and $\mu_{w}$ are Lyndon words and $w^{\flat}$ is a periodic extension of $\lambda_{w}$.

An \emph{infinite word} is a sequence $\bfx = a_{1} a_{2} a_{3} \ldots a_{n} \ldots$ where $a_{n} \in \calA$.
The set of infinite words over $\calA$ is denoted by $\calA^{\N}$.
A \emph{tail} of the infinite word $\bfx$ is another infinite word $\bfy \in \calA^{\N}$ such that $\bfx = x \bfy$ for some $x\in \calA^{*}$.
An infinite word \bfx is \emph{purely periodic} if $\bfx = uuu \ldots u \ldots$ for some finite word $u \in \calA^{+}$; we also denote this by $\bfx = u^{\omega}$.
The word $\bfx$ is \emph{ultimately periodic} if it has a purely periodic tail.
Finally, $\bfx$ is \emph{aperiodic} if it is not ultimately periodic.
A \emph{factor} of $\bfx$ is a finite word that occurs somewhere in $\bfx$.
The set  of all factors of $\bfx$ is denoted by $F(\bfx)$. The word $\bfx$ is called \emph{recurrent} if each of its factors occurs at least twice (and thus infinitely many times) in $\bfx$.
If an infinite word is ultimately periodic and recurrent, then it can be shown to be purely periodic.
The \emph{shift orbit closure} of $\bfx$ is the set of all infinite words $\bfy\in \calA^{\N}$ such that $F(\bfy) \subseteq F(\bfx)$.

Let $f_{n}$ be the finite Fibonacci words over $\{\wa, \wb\}$ defined above with $f_{1} = \wb$ and $f_{2} = \wa$.
If $n\geq2$, then $f_{n}$ is a prefix of $f_{n+1}$. Thus there exists a unique infinite word $\bff$ such that $f_{n}$ is a prefix of $\bff$ for every $n\geq 2$. The word $\bff$ is called a \emph{Fibonacci infinite word} over $\{\wa, \wb\}$. Note that there is another Fibonacci infinite word over $\{ \wa, \wb\}$, which results from defining $f_{1} = \wa$ and $f_{2} = \wb$. When we want to stress the definition of $f_{1}$ and $f_{2}$ when constructing $\bff$, we denote $\bff = \lim_{\ntoinf}f_{n}$. The Fibonacci infinite words are easily seen to be recurrent,
and it can be shown that they are even \emph{uniformly recurrent}, which means that if $w$ is a factor of $\bff$, then there exists an integer $N\geq 1$ such that every factor of $\bff$ of length $N \nabs{w}$ has a factor~$w$. This implies that  $\bfx$ is in the shift orbit closure of $\bff$ if and only if $F(\bfx) = F(\bff)$.

% The infinite Fibonacci words are an archetype of Sturmian words~\cite{Lothaire2002}.

\begin{lemma}\label{ma 16-07-2012:1521}
Let $f_{n}$ denote the Fibonacci words, $p_{n}$ the corresponding central words, and $\bff = \lim_{\ntoinf} f_{n}$.
\begin{enumerate}
 \item The words $p_{n}$ are palindromes~\cite{deLuca1981}.
 \item The Lyndon factors of $\bff$ are precisely the Lyndon conjugates of $f_{n}$ \cite[Lemma~7]{CurSaa2009}.
 \item If $w$ is a conjugate of $f_{n}$, then its reversal $w^{R}$ is  a conjugate of $f_{n}$~\cite{WenWen1994}.
\end{enumerate}
 
\end{lemma}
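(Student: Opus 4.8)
The three assertions are all classical, each recorded in the source cited alongside it, so the plan is chiefly to recall the shortest route to each --- carrying out the routine parts and deferring the one lengthy piece of bookkeeping to the literature. Throughout I would write $f_{n} = p_{n}xy$ with $xy \in \{\wa\wb,\wb\wa\}$ as above, and let $\ell_{n}$ denote the unique Lyndon word in the conjugacy class of $f_{n}$; thus $\ell_{1} = \wb$, $\ell_{2} = \wa$, and for $n \geq 3$ a short rotation count identifies $\ell_{n}$ with the Fibonacci Lyndon word $\wa p_{n}\wb$ (Lemma~\ref{140720121241}), although that will not actually be needed below.

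For part~(1) nothing new is needed: when $n \in \{3,4\}$ the word $p_{n}$ is $\epsilon$ or $\wa$, hence a palindrome, and when $n \geq 5$ the discussion preceding the lemma already exhibits $p_{n}$ as a central word over $\{\wa,\wb\}$, and central words were recalled there to be palindromes; this is precisely \cite{deLuca1981}. For part~(3) the plan has two steps. First I would prove the folklore identity that, for $n \geq 3$, the words $f_{n-1}f_{n-2}$ and $f_{n-2}f_{n-1}$ coincide except in their final two letters; this drops out of a short induction, writing $f_{n+1}f_{n} = f_{n}\,(f_{n-1}f_{n})$ and $f_{n}f_{n+1} = f_{n}\,(f_{n}f_{n-1})$ and invoking the inductive hypothesis on the bracketed blocks. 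Combined with part~(1) and $f_{n} = f_{n-1}f_{n-2}$, this yields $f_{n} = p_{n}xy$ and $f_{n-2}f_{n-1} = p_{n}yx$ with $p_{n}$ a palindrome and $\{x,y\} = \{\wa,\wb\}$; hence $f_{n}^{R} = (p_{n}xy)^{R} = yx\,p_{n}$, which is a cyclic rotation of $p_{n}yx = f_{n-2}f_{n-1}$, itself a cyclic rotation of $f_{n}$, so $f_{n}^{R}$ is a conjugate of $f_{n}$. Finally, for an arbitrary conjugate $w = vu$ of $f_{n}$ (where $f_{n} = uv$) we have $w^{R} = u^{R}v^{R}$ whereas $f_{n}^{R} = v^{R}u^{R}$, so $w^{R}$ is a cyclic rotation of $f_{n}^{R}$ and therefore, by transitivity of conjugacy, a conjugate of $f_{n}$; this recovers \cite{WenWen1994}.

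Part~(2) is where real work is needed. For the inclusion ``$\supseteq$'' I would use the identity $f_{n+3} = f_{n+1}f_{n}f_{n+1} = f_{n}f_{n-1}f_{n}f_{n}f_{n-1}$, which shows that $f_{n}^{2}$ occurs in $\bff$ for every $n \geq 2$; since a window of length $F_{n}$ sliding across an occurrence of $f_{n}^{2}$ realizes every cyclic rotation of $f_{n}$ as a factor of $\bff$, in particular $\ell_{n}$ is a factor of $\bff$, as are the letters $\ell_{1},\ell_{2}$. For ``$\subseteq$'' the plan is induction on length: a Lyndon factor of length~$1$ is a letter, hence $\ell_{1}$ or $\ell_{2}$; and a Lyndon factor $w$ with $\nabs{w} \geq 2$ factors, by Lemma~\ref{pe 15-06-2012:0726}, as $w = \lambda_{w}\mu_{w}$ with $\lambda_{w},\mu_{w}$ Lyndon and $w^{\flat}$ a periodic extension of $\lambda_{w}$; since $\lambda_{w}$ and $\mu_{w}$ are a prefix and a suffix of the factor $w$, they are themselves Lyndon factors of $\bff$, so by induction $\lambda_{w} = \ell_{i}$ and $\mu_{w} = \ell_{j}$ for some $i,j$.

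The main obstacle is closing this induction: one must show that the only pairs $(i,j)$ for which $\ell_{i}\ell_{j}$ arises as such a canonical factorization ($\ell_{i}$ being the longest proper Lyndon prefix) of a Lyndon factor of $\bff$ are those with $\ell_{i}\ell_{j} = \ell_{m}$ for some $m$. Here the hypothesis that $\bff$ is Sturmian is essential --- most Lyndon words (for instance any one containing $\wa\wa\wa$) simply do not occur in $\bff$ at all. I would attack this using that $\bff$ is balanced and has exactly $F_{n}+1$ factors of each length $F_{n}$, together with the period constraint imposed by ``$w^{\flat}$ a periodic extension of $\lambda_{w}$'': this forces $w^{\flat}$ to be a factor of $\bff$ of period $\nabs{\ell_{i}}$, so that Fine and Wilf (Lemma~\ref{020720122145}) together with the limited periodicity of factors of $\bff$ pins down $\nabs{w}$, and hence $j$, in terms of $i$. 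This is essentially the content of \cite[Lemma~7]{CurSaa2009}, and the argument there, while elementary, is fairly technical; if space is short I would simply cite it.
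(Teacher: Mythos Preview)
The paper supplies no proof of this lemma at all: it simply records the three facts with the citations shown and moves on. Your proposal therefore does strictly more than the paper. Your arguments for parts~(1) and~(3) are correct and self-contained --- reducing (1) to the palindromicity of central words (already recalled in the preliminaries) and deriving (3) from the near-commutation identity $f_{n-1}f_{n-2}$ versus $f_{n-2}f_{n-1}$ together with part~(1) is exactly the standard route. For part~(2) your easy inclusion is fine, and your inductive framework for the hard inclusion is reasonable; you correctly isolate the technical step (ruling out products $\ell_{i}\ell_{j}$ that are not themselves some $\ell_{m}$) and defer it to \cite[Lemma~7]{CurSaa2009}, which is precisely what the paper does for the whole statement. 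So your write-up is more informative than the paper's treatment, with no gaps beyond the one you explicitly flag and cite away.
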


\begin{lemma}\label{ma 16-07-2012:1209}
Let $\bff = \lim_{\ntoinf} f_{n}$ be a Fibonacci word for which $\nsset{f_{1}, f_{2}} = \{\wa, \wb\}$. Write $f_{n} = p_{n}xy$ with $xy \in \{\wa\wb, \wb\wa\}$  and suppose that $\wa < \wb$. Then the Lyndon conjugate of $f_{n}$ is the word $\wa p_{n} \wb$ for all $n\geq3$.
Furthermore, every Lyndon factor of $\bff$ that is shorter than $\wa p_{n} \wb$ is either a prefix or a suffix of $\wa p_{n} \wb$.
\end{lemma}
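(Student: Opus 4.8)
The plan is to prove the two assertions separately. For the identification of the Lyndon conjugate I will combine Lemma~\ref{140720121241} with the reversal symmetry of Fibonacci words (Lemma~\ref{ma 16-07-2012:1521}(3)); for the statement about shorter Lyndon factors I will use that every $p_k$ is a palindromic prefix of~$\bff$.

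First I would show that $\wa p_n \wb$ is the Lyndon conjugate of $f_n$. Since $p_n$ is a central word over $\{\wa,\wb\}$ --- for $n = 3,4$ because then $p_n \in \wa^{*}$, and for $n \geq 5$ by the period property of $p_n$ recalled in Section~2 --- and $\wa < \wb$, Lemma~\ref{140720121241} gives that $\wa p_n \wb$ is a Lyndon word, hence primitive. It then suffices to verify that $\wa p_n \wb$ is a conjugate of $f_n$: once that is known, primitivity transfers to $f_n$ (being conjugacy-invariant), so $f_n$ has a unique Lyndon conjugate and it must be $\wa p_n \wb$. To produce the conjugacy, write $f_n = p_n x y$ with $\{x,y\} = \{\wa,\wb\}$, as in the hypothesis. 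Since $p_n$ is a palindrome (Lemma~\ref{ma 16-07-2012:1521}(1)), $f_n^{R} = y x\, p_n$, and by Lemma~\ref{ma 16-07-2012:1521}(3) this is a conjugate of $f_n$. Moving the first letter of $y x\, p_n$ to its end yields the conjugate $x\, p_n\, y$, while moving the last letter of $f_n = p_n x y$ to its front yields the conjugate $y\, p_n\, x$; since $\{x, y\} = \{\wa, \wb\}$, one of these two words is exactly $\wa p_n \wb$, which settles the first assertion.

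Next I would treat a Lyndon factor $u$ of $\bff$ with $\nabs{u} < \nabs{\wa p_n \wb} = F_n$. By Lemma~\ref{ma 16-07-2012:1521}(2), $u$ is the Lyndon conjugate of some $f_m$, so $\nabs{u} = F_m < F_n$, which forces $m \leq n - 1$. If $m \leq 2$, then $u$ is a single letter, hence either the initial letter $\wa$ or the final letter $\wb$ of $\wa p_n \wb$, and we are done. If $3 \leq m \leq n - 1$, the first part gives $u = \wa p_m \wb$, and the key point is that $p_m$ is both a prefix and a suffix of $p_n$: a prefix because $p_m$ and $p_n$ are both prefixes of $\bff$ (each $p_k$ being a prefix of $f_k$, which is a prefix of $\bff$) and $\nabs{p_m} \leq \nabs{p_n}$; a suffix because then $p_m = p_m^{R}$ is a suffix of $p_n^{R} = p_n$. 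Let $\ell$ be the letter of $p_n$ in position $F_m - 1$ (a legitimate position, since $F_m \leq F_n - 1$). Then the prefix of $p_n$ of length $F_m - 1$ is $p_m \ell$, while the suffix of $p_n$ of length $F_m - 1$ is $\ell p_m$, the boundary letter being the same in both because $p_n$ is a palindrome. Consequently the prefix of $\wa p_n \wb$ of length $F_m$ is $\wa p_m \ell$ and its suffix of length $F_m$ is $\ell p_m \wb$. Since $\ell \in \{\wa, \wb\}$: if $\ell = \wb$ then $u = \wa p_m \wb$ is a prefix of $\wa p_n \wb$, and if $\ell = \wa$ then $u = \wa p_m \wb$ is a suffix of $\wa p_n \wb$. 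This is exactly the claim.

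The routine verifications along the way are the length inequalities --- in particular $F_m \leq F_n - 1$ for $m \leq n - 1$, which follows from $F_n - F_{n-1} = F_{n-2} \geq 1$, together with $\nabs{f_k} = F_k$ and $\nabs{p_k} = F_k - 2$ --- and the fact that $f_k$ is a prefix of $f_{k+1}$, hence of $\bff$, for every $k \geq 2$. The one step requiring genuine care is the palindrome bookkeeping at the end: one has to read off the prefix and the suffix of $\wa p_n \wb$ of length $F_m$ correctly and observe that the letter $\ell$ abutting the copy of $p_m$ is the same at both ends. Once this symmetry is in place, the dichotomy ``prefix or suffix'' falls out with no case analysis on the parity of $m$, which is what makes the argument short.
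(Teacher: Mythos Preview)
Your proof is correct and follows essentially the same route as the paper's. For the first assertion you use Lemma~\ref{140720121241} plus the palindromicity of $p_n$ and the reversal-closure of the conjugacy class of $f_n$, exactly as the paper does; your way of exhibiting both $x p_n y$ and $y p_n x$ as conjugates simultaneously is a tidy packaging of the paper's two cases $f_n = p_n\wb\wa$ and $f_n = p_n\wa\wb$. For the second assertion the paper uses that $f_k$ (length $F_k$) is a prefix of $p_n$ for $k\geq 4$ and handles $k=3$ separately via a case split on $f_2$; your variant, looking only at the prefix $p_m$ (length $F_m-2$) together with the single abutting letter $\ell$ at position $F_m-1$ of $p_n$, is the same idea with a slight streamlining that absorbs the $m=3$ case without a separate argument.
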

\begin{proof}
First off, the word $\wa p_{n} \wb$ is a Lyndon word by Lemma~\ref{140720121241}. Thus the first claim is proved by showing that $\wa p_{n} \wb$ is a conjugate of $f_{n}$.
This is clear if $f_{n} = p_{n} \wb \wa$, so assume that $f_{n} = p_{n} \wa \wb$ instead.
Then $\wb p_{n} \wa$ is a conjugate of $f_{n}$. Since the reversal of a conjugate of $f_{n}$ is a conjugate of $f_{n}$ and since $p_{n}$ is a palindrome by Lemma~\ref{ma 16-07-2012:1521}, it follows that $\wa p_{n}^{R} \wb = \wa p_{n} \wb$ is a conjugate of $f_{n}$.

Next we show that if $k <  n$, then the Lyndon conjugate of $f_{k}$ is a prefix or a suffix of $\wa p_{n} \wb$. 
Since $\nsset{f_{1}, f_{2}} = \nsset{\wa, \wb}$, the claim is plainly true for $k=1,2$. 
Furthermore, if $f_{2} = \wa$, then the Lyndon conjugate of $f_{3}$, which is $\wa\wb$, is a suffix of $\wa p_{n} \wb$; and 
if $f_{2} = \wb$, then the Lyndon conjugate of $f_{3}$ is a prefix of $\wa p_{n} \wb$.
Thus we may suppose that $k\geq 4$. Then $k < n$ implies that $f_{k}$ is a prefix of~$p_{n}$. Furthermore, since $p_{n}$ is a palindrome, the reversal $f_{k}^{R}$ is a suffix of $p_{n}$.
Therefore if $f_{k} = p_{k} \wb \wa$, then its Lyndon conjugate $\wa p_{k} \wb$ is a prefix of $\wa p_{n} \wb$; and if $f_{k} = p_{k} \wa \wb$, then
its Lyndon conjugate $\wa p_{k} \wb$ is a suffix of $\wa p_{n}\wb$.
\end{proof}

\section{Lyndon factors of Lyndon words}\label{to 15-11-2012 14:27 winnipeg}

Let $w \in \calA^{+}$ be a Lyndon word.
We denote the number of distinct Lyndon factors of $w$ by~$\calL(w)$. 
A trivial but useful observation is that if $\nabs{w}\geq 2$, then 
\[
\calL(w) \geq \calL(\lambda_{w}) + 1 \qqtext{and} \calL(w) \geq \calL(\mu_{w}) + 1,
\]
where $\lambda_{w}$ and $\mu_{w}$ are the Lyndon words provided by Lemma~\ref{pe 15-06-2012:0726}.
If $\nabs{w} \geq 2$, let  $p_{w}$ denote  the word such that $w = \wa p_{w} \wb$ for some letters $\wa,\wb \in \calA$.

\begin{lemma}\label{020720122029}
If $w \in \calA^{+}$ is a Fibonacci Lyndon word of length $F_{n}$ with $n\geq 3$, then $\calL(w) = n$.
\end{lemma}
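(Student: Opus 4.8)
The plan is to show that the Lyndon factors of $w$ are precisely the Lyndon conjugates of the finite Fibonacci words $f_1, \dots, f_n$, using the structural information about the Fibonacci infinite word collected in Lemmas~\ref{ma 16-07-2012:1521} and~\ref{ma 16-07-2012:1209}. It is enough to handle $w = \wa p_n \wb$ with $\wa < \wb$; the case $w = \wa c(p_n)\wb$ is identical after replacing $\bff$ below by the other Fibonacci infinite word over $\{\wa,\wb\}$, whose central word of length $F_n-2$ is $c(p_n)$. So I would fix $\bff = \lim f_n$ with $\{f_1,f_2\}=\{\wa,\wb\}$ and $f_n = p_n xy$, $xy\in\{\wa\wb,\wb\wa\}$, and write $\ell_k$ for the Lyndon conjugate of $f_k$, so $\nabs{\ell_k}=F_k$. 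By Lemma~\ref{ma 16-07-2012:1209}, $\ell_n = \wa p_n \wb = w$; in particular, by Lemma~\ref{ma 16-07-2012:1521}(2), $w$ is a Lyndon factor of $\bff$, hence $F(w)\subseteq F(\bff)$ since factors of a factor of $\bff$ are factors of $\bff$.

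For the bound $\calL(w)\leq n$: every Lyndon factor of $w$ belongs to $F(\bff)$, so by Lemma~\ref{ma 16-07-2012:1521}(2) it equals some $\ell_k$; since $\nabs{\ell_k}=F_k\leq\nabs{w}=F_n$ and $F_{n+1}>F_n$, we must have $k\leq n$. Moreover $\ell_1,\dots,\ell_n$ are pairwise distinct --- $\ell_1$ and $\ell_2$ are the two distinct letters of $\{\wa,\wb\}$, while $F_3<F_4<\dots<F_n$ are strictly increasing lengths, all at least $2$ --- so $w$ has at most $n$ distinct Lyndon factors.

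For the reverse bound $\calL(w)\geq n$, I would check that each $\ell_k$ with $1\leq k\leq n$ actually occurs in $w$. We have $\ell_n=w$. For $k<n$, the word $\ell_k$ is a Lyndon factor of $\bff$ of length $F_k\leq F_{n-1}<F_n=\nabs{w}$, so Lemma~\ref{ma 16-07-2012:1209} forces $\ell_k$ to be a prefix or a suffix of $w=\wa p_n\wb$, in particular a factor of $w$. Hence $\{\ell_1,\dots,\ell_n\}$ is a set of $n$ pairwise distinct Lyndon factors of $w$, giving $\calL(w)\geq n$; combined with the previous paragraph, $\calL(w)=n$.

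I do not anticipate a genuine obstacle here: once the two Fibonacci Lyndon words of length $F_n$ are correctly matched with the two Fibonacci infinite words over $\{\wa,\wb\}$ so that Lemma~\ref{ma 16-07-2012:1209} is applicable, everything reduces to invoking that lemma together with Lemma~\ref{ma 16-07-2012:1521}(2) and the elementary Fibonacci inequalities $F_{n-1}<F_n<F_{n+1}$ and $F_k\geq 2$ for $k\geq3$, all valid since $n\geq3$. The one place to be slightly careful is the distinctness count of $\ell_1,\dots,\ell_n$, which requires separating the two letters $\ell_1,\ell_2$ from the strictly longer $\ell_3,\dots,\ell_n$.
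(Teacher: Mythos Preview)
Your argument is correct and follows essentially the same route as the paper: identify $w$ with the Lyndon conjugate of $f_n$ for a suitable choice of $f_1,f_2$, use Lemma~\ref{ma 16-07-2012:1209} to see that every $\ell_k$ with $k\leq n$ occurs in $w$ as a prefix or suffix, and use Lemma~\ref{ma 16-07-2012:1521}(2) to see that these are the only Lyndon factors. You are simply more explicit than the paper about the distinctness of $\ell_1,\dots,\ell_n$ and the reduction of the case $\wa c(p_n)\wb$ to the other Fibonacci infinite word.
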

\begin{proof}
The word $w$ is the Lyndon conjugate of~$f_{n}$ (for some choice of $f_{1}, f_{2}\in \calA$). 
Each of the Lyndon conjugates of $f_{k}$ with $1 \leq k \leq n$ are either prefixes or suffixes of $w$ by Lemma~\ref{ma 16-07-2012:1209}, and these are the only Lyndon factors of $w$ by Lemma~\ref{ma 16-07-2012:1521}. Thus $\calL(w) = n$.
\end{proof}

\begin{lemma}\label{la140720121644}
Let $w \in \calA^{+}$ be a Lyndon word with $\nabs{w} \geq F_{n}$ for $n\geq3$ and let $\wa, \wb \in \calA$ be the letters such that $w = \wa p_{w} \wb$.
Then $w$ is a Fibonacci Lyndon word over $\{ \wa, \wb \}$ of length $F_{n}$ if and only if $\wa p_{w}$ has period $F_{n-1}$ and $p_{w} \wb$ has period $F_{n-2}$, or vice versa.
\end{lemma}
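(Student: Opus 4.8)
The plan is to prove the two implications separately; the substantive work happens for $n\geq 5$, and the cases $n=3,4$ — where the central word $p_{w}$ is empty or a single letter — I would dispose of by directly inspecting the Lyndon words $\wa\wb$ and $\wa\wa\wb,\wa\wb\wb$. Throughout I would use that any factor of a word with period $r$ again has period $r$, and that a word of the form $\wa\,\wb^{m}$ or $\wa^{m}\wb$ with $m\geq1$ has no period smaller than its length.

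For the ``if'' direction I would assume, without loss of generality (the other case being symmetric in $F_{n-1}$ and $F_{n-2}$), that $\wa p_{w}$ has period $F_{n-1}$ and $p_{w}\wb$ has period $F_{n-2}$. Then $p_{w}$, being a factor of each, inherits both periods. First I would rule out that $p_{w}$ is a power of a single letter: if $p_{w}\in\wb^{*}$ then $\wa p_{w}=\wa\wb^{\nabs{p_{w}}}$ would force $F_{n-1}\geq\nabs{p_{w}}+1\geq F_{n}-1$, impossible for $n\geq5$, and $p_{w}\in\wa^{*}$ is excluded in the same way; hence $p_{w}$ does not have period~$1$. Since $F_{n-1}$ and $F_{n-2}$ are coprime, Lemma~\ref{020720122145} then gives $F_{n}-1=F_{n-1}+F_{n-2}-1>\nabs{p_{w}}$, so $\nabs{p_{w}}\leq F_{n}-2$; combined with $\nabs{w}\geq F_{n}$ this forces $\nabs{w}=F_{n}$ and $\nabs{p_{w}}=F_{n}-2$. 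Next, $\wa p_{w}$ having period $F_{n-1}$ pins the letter of $p_{w}$ in position $F_{n-1}$ to be $\wa$, and $p_{w}\wb$ having period $F_{n-2}$ pins the letter in position $F_{n-1}-1$ to be $\wb$; invoking the uniqueness, up to renaming of letters, of the extremal Fine and Wilf word of given periods (equivalently, of the central word $p_{n}$) recalled in the preliminaries, I would conclude that $p_{w}$ is a word over $\{\wa,\wb\}$, whence $p_{w}\in\{p_{n},c(p_{n})\}$ and $w$ is a Fibonacci Lyndon word of length $F_{n}$ over $\{\wa,\wb\}$.

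For the ``only if'' direction, let $w$ be a Fibonacci Lyndon word of length $F_{n}$ over $\{\wa,\wb\}$, so that $p_{w}\in\{p_{n},c(p_{n})\}$; then $p_{w}$ is a palindrome (the morphism $c$ commutes with reversal) of length $F_{n-1}+F_{n-2}-2$ having both periods $F_{n-1},F_{n-2}$ and containing both letters. I would apply Lemma~\ref{080720121124}(ii) to $p_{w}$ with the periods $F_{n-1},F_{n-2}$, taking $v$ to be the suffix of $p_{w}$ of length $F_{n-2}$: then $p_{w}v$ has period $F_{n-2}$, and passing to its prefix of length $\nabs{p_{w}}+1$ shows that $p_{w}c_{1}$ has period $F_{n-2}$, where $c_{1}$ denotes the letter of $p_{w}$ in position $F_{n-1}-1$. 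Applying the same lemma again, now with the periods taken in the order $F_{n-2},F_{n-1}$ and $v$ the suffix of length $F_{n-1}$, shows that $p_{w}c_{2}$ has period $F_{n-1}$, where $c_{2}$ is the letter of $p_{w}$ in position $F_{n-2}-1$. The decisive step is to show $c_{1}\neq c_{2}$: if they were equal, then $p_{w}c_{1}$ would have both periods $F_{n-1},F_{n-2}$ and length $F_{n}-1=F_{n-1}+F_{n-2}-1$, so Lemma~\ref{020720122145} would give it period~$1$ and thus make $p_{w}$ monochromatic, contradicting that $p_{w}$ contains both letters. Since $c_{1}\neq c_{2}$ and both lie in $\{\wa,\wb\}$, exactly one of them is $\wb$: if $c_{1}=\wb$ then $p_{w}\wb$ has period $F_{n-2}$ and $p_{w}\wa=p_{w}c_{2}$ has period $F_{n-1}$, whence, reversing and using that $p_{w}$ is a palindrome, $\wa p_{w}$ has period $F_{n-1}$; if $c_{2}=\wb$ then symmetrically $\wa p_{w}$ has period $F_{n-2}$ and $p_{w}\wb$ has period $F_{n-1}$. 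In either case one of the two asserted alternatives holds.

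The step I expect to be the main obstacle is precisely the inequality $c_{1}\neq c_{2}$ in the ``only if'' part, i.e.\ that the two letters of the central word $p_{w}$ read at positions $F_{n-1}-1$ and $F_{n-2}-1$ are distinct. The Fine and Wilf argument sketched above resolves it, but together with the two applications of Lemma~\ref{080720121124}(ii) it demands careful bookkeeping of positions and suffix lengths — which is also why the degenerate cases $n=3,4$ must be handled separately. A subsidiary point requiring care, in the ``if'' direction, is the verification that the periodicity hypotheses already confine $p_{w}$ to the two-letter alphabet $\{\wa,\wb\}$; for this I would rely on the known fact that an extremal Fine and Wilf word is, up to renaming of letters, a central word.
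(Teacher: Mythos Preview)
Your proof is correct. The ``if'' direction is essentially the paper's argument: show that $p_{w}$ inherits both periods, rule out period~$1$, and then invoke Fine and Wilf (Lemma~\ref{020720122145}) to force $\nabs{p_{w}}=F_{n}-2$ and identify $p_{w}$ as a central word over $\{\wa,\wb\}$. (One small wording issue: when you exclude $p_{w}\in\wa^{*}$, the argument via $\wa p_{w}$ no longer bites, since $\wa^{m+1}$ has every period; you must instead look at $p_{w}\wb=\wa^{m}\wb$ and use the period $F_{n-2}$. Likewise, a monochromatic $p_{w}$ in a \emph{third} letter is excluded by the same reasoning, which you do not state explicitly but which falls out of your own position-pinning argument anyway.)

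For the ``only if'' direction you take a genuinely different route. The paper argues by elimination: if $\wa p_{w}$ had \emph{neither} period, then---since $p_{w}\in\{\wa,\wb\}^{*}$---the word $\wb p_{w}$ would have \emph{both}, contradicting Lemma~\ref{020720122145}; symmetrically for $p_{w}\wb$; and the two cannot share a period because $w$ is unbordered (Lemma~\ref{070820102200}). You instead identify explicitly the letters $c_{1},c_{2}$ of $p_{w}$ whose one-letter right-extensions preserve the periods $F_{n-2}$ and $F_{n-1}$ respectively (your appeal to Lemma~\ref{080720121124}(ii) is correct, though overkill: the elementary fact ``if $u$ has period $q\leq\nabs{u}$ and $c$ is its $(\nabs{u}-q+1)$-th letter, then $uc$ has period $q$'' already suffices), prove $c_{1}\neq c_{2}$ by Fine and Wilf, and then transfer the $\wa$-extension from the right to the left via the palindrome property of $p_{w}$. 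Both arguments hinge on Lemma~\ref{020720122145}, but the paper's pivots on the unborderedness of the Lyndon word $w$, whereas yours pivots on the palindromicity of the central word $p_{w}$; the paper's version is shorter and avoids the position bookkeeping, while yours has the merit of making the two alternatives in the statement arise concretely from the two possible assignments $\{c_{1},c_{2}\}=\{\wa,\wb\}$.
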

\begin{proof}
The claim is readily verified for $n \leq 4$, so assume that $n>4$.

Suppose first that $w$ is a Fibonacci Lyndon word over $\{\wa, \wb\}$ of length $F_{n}$. 
Since $n \geq 5$, the word $p_{w} = p_{n}$  does not have period $\gcd(F_{n-2}, F_{n-1}) = 1$, but it has periods $F_{n-2}$ and $F_{n-1}$.
We claim that the word $\wa p_{w}$ has period $F_{n-2}$ or $F_{n-1}$. Indeed,  if $p_{w}$ did not have either period, then $\wb p_{w}$ would have both periods, contradicting Lemma~\ref{020720122145} because $\nabs{p_{w}} = F_{n} - 2$ and $\wb p_{w}$ does not have period~1. An analogous argument shows that $p_{w} \wb$ must have period either $F_{n-2}$ or $F_{n-1}$.
Finally, $\wa p_{w}$ and $p_{w}\wb$ cannot have the same period $F_{n-2}$ or $F_{n-1}$ because otherwise $w$ would have the same period, which it does not since it is unbordered by Lemma~\ref{070820102200}.

Conversely, suppose that $\wa p_{w}$ has period $F_{n-1}$ and $p_{w} \wb$ has period $F_{n-2}$, or vice versa.
Then since $n\geq 5$, we have $F_{n-1} < \nabs{\wa p_{w}} = \nabs{p_{w}\wb}$. This implies that both $\wa$ and $\wb$ occur in $p_{w}$, and thus $\gcd(F_{n-1}, F_{n-2}) = 1$ is not a period of~$p_{w}$. Since $p_{w}$ does have periods $F_{n-2}$ and $F_{n-1}$ and length $\geq F_{n} - 2$,
it follows that actually $\nabs{p_{w}} = F_{n} - 2$ and that $p_{w}$ is a central word. Thus $w$ is a Fibonacci Lyndon word over $\{\wa, \wb\}$ of length~$F_{n}$.
\end{proof}

\begin{lemma}\label{160620121145}
Let $w \in \calA^{+}$ be a Lyndon word with $\nabs{w} \geq F_{n}$ for some $n\geq 3$.
Then we have  $\calL(w) \geq n$.
Furthermore if $\calL(w) = n$, then $w$ is a Fibonacci Lyndon word of length~$F_{n}$.
\end{lemma}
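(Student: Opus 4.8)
The plan is to prove the whole statement — the bound $\calL(w)\ge n$ together with the equality characterization — simultaneously by induction on $n$, so that the equality clause is available as part of the induction hypothesis. The base case $n=3$ is immediate: a Lyndon word $w$ with $|w|\ge2$ is primitive, hence contains two distinct letters, which together with $w$ itself give $\calL(w)\ge3$; and if $|w|\ge3$ the transition in $w$ between its (smallest) first letter and its (largest) last letter yields at least one further Lyndon factor, so $\calL(w)\ge4$. Thus $\calL(w)=3$ forces $|w|=2$, i.e. $w=\wa\wb$, which is the Fibonacci Lyndon word of length $F_3$.

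For the inductive step $n\ge4$ I would use the factorisation $w=\lambda_w\mu_w$ of Lemma~\ref{pe 15-06-2012:0726}, which gives $\calL(w)\ge\calL(\lambda_w)+1$ and $\calL(w)\ge\calL(\mu_w)+1$, and split on lengths. Since $|\lambda_w|+|\mu_w|=|w|\ge F_n=F_{n-1}+F_{n-2}$, either (a) one of $\lambda_w,\mu_w$ has length $\ge F_{n-1}$, or (b) both have length strictly between $F_{n-2}$ and $F_{n-1}$; case (b) is vacuous when $n=4$, since it would force $|w|=2<F_4$. In case (a) the induction hypothesis gives that factor $\calL\ge n-1$, hence $\calL(w)\ge n$. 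In case (b) the induction hypothesis gives $\calL(\lambda_w),\calL(\mu_w)\ge n-2$; since the lengths of $\lambda_w$ and $\mu_w$ exceed $F_{n-2}$, neither can be a Fibonacci Lyndon word of length $F_{n-2}$, so the \emph{equality} clause of the induction hypothesis upgrades both to $\calL\ge n-1$, and again $\calL(w)\ge n$. This upgrading via the equality clause is precisely what replaces the base $2$ of the naive bound by the golden ratio.

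For the equality case $\calL(w)=n$ I would first exclude case (b): there it would force $\calL(\lambda_w)=\calL(\mu_w)=\calL(w)-1$, so the sets of Lyndon factors of $\lambda_w$ and of $\mu_w$ each coincide with the Lyndon factors of $w$ other than $w$ itself, hence with one another; then $\lambda_w$ and $\mu_w$ are factors of each other and so equal, contradicting $\lambda_w<w<\mu_w$ (Lemma~\ref{070820102200}). So case (a) holds; let $v$ be whichever of $\lambda_w,\mu_w$ has length $\ge F_{n-1}$ and $v'$ the other one. Then $\calL(v)=n-1$ forces, by the equality clause, $v$ to be a Fibonacci Lyndon word of length $F_{n-1}$, while $\calL(w)=\calL(v)+1$ forces $v'$ to be a factor of $v$, so $F_{n-2}\le|v'|\le F_{n-1}$; running the same coincidence argument on $v'$ and $v$ pins down $\calL(v')=n-2$, so by the equality clause $v'$ is a Fibonacci Lyndon word of length $F_{n-2}$, and consequently $|w|=F_{n-1}+F_{n-2}=F_n$.

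It then remains to show that such a $w$ — a Lyndon word of length $F_n$ equal to $\lambda_w\mu_w$, where $\{\lambda_w,\mu_w\}$ are the Fibonacci Lyndon words of lengths $F_{n-1},F_{n-2}$, the shorter a factor of the longer, and $w^\flat$ a periodic extension of $\lambda_w$ — is a Fibonacci Lyndon word of length $F_n$. Writing $w=\wa p_w\wb$, periodicity of the extension says $\wa p_w=w^\flat$ has period $|\lambda_w|$. By Lemma~\ref{ma 16-07-2012:1209} the shorter of $\lambda_w,\mu_w$ is a prefix or a suffix of the longer, and comparison with the periodic structure of $w^\flat$ rules out one of these two possibilities: when $\lambda_w$ is the longer, $\mu_w$ must be a \emph{suffix} of $\lambda_w$, which forces $\lambda_w^\flat$ to have period $F_{n-3}$, hence (Lemma~\ref{la140720121644} applied to $\lambda_w$) forces $\lambda_w$ with its first letter deleted to have period $F_{n-2}$; since $\mu_w$ occurs as a suffix both of $\lambda_w$ and of $w$, this period propagates to show that $p_w\wb$ has period $F_{n-2}$, so by Lemma~\ref{la140720121644} $w$ is a Fibonacci Lyndon word of length $F_n$. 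The case where $\lambda_w$ is the shorter is handled symmetrically, transferring periods by means of the Fine--Wilf lemma (Lemma~\ref{020720122145}) and the Castelli--Mignosi--Restivo lemma (Lemma~\ref{080720121124}). I expect this last step — the bookkeeping with periods needed to bring the situation into the form required by Lemma~\ref{la140720121644} — to be the main obstacle; everything preceding it is a clean induction driven by the equality-clause upgrade.
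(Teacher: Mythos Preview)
Your approach is essentially the same as the paper's: induction on $n$, use of the factorization $w=\lambda_w\mu_w$, and the final identification via Lemma~\ref{la140720121644}. Your two-way split (a)/(b) together with the ``coincidence argument'' (that $\calL(w)=\calL(v)+1$ forces every proper Lyndon factor of $w$, in particular $v'$, to occur inside~$v$) is a genuine tidying of the paper's seven cases (i)--(vii): it absorbs cases (ii-a), (ii-b), (iii)--(v) into one line, and delivers directly that in the equality case the shorter of $\lambda_w,\mu_w$ is a Fibonacci Lyndon word of length $F_{n-2}$ occurring as a prefix or suffix of the longer (Lemma~\ref{ma 16-07-2012:1209}), with the bordered alternative excluded. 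That is a real gain in clarity.

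Two cautions, however. First, your derivation ``$\calL(v')=n-2$'' fails at $n=4$: there $v'$ is a single letter with $\calL(v')=1$, and the induction hypothesis at level $n-2=2$ is outside the lemma's range. This is harmless --- you can take $n\le 4$ as base cases, as the paper does --- but as written your argument for $n=4$ does not go through. Second, and more importantly, the two subcases of the period-transfer step are not symmetric. When $\lambda_w$ is the longer word your sketch is exactly the paper's Case~(ii-c): $\mu_w$ sits as the length-$F_{n-2}$ suffix of $p_{\lambda_w}\wb$, which has period $F_{n-2}$, so Lemma~\ref{080720121124}(ii) extends that period to $p_w\wb=p_{\lambda_w}\wb\,\mu_w$. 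But when $\lambda_w$ is the shorter word the analogous one-line argument is not available: one has to first show that $p_{\lambda_w}\wb\wa$ is a prefix of $p_{\mu_w}$ (which needs $n\ge 6$ and a separate check at $n=5$), then use Lemma~\ref{080720121124}(ii) in the \emph{other} direction to get period $F_{n-1}$ on $p_w$, and finally invoke Fine--Wilf to push that period onto $p_w\wb$. The paper's Case~(vi) does exactly this, and it is noticeably longer than Case~(ii-c); your phrase ``handled symmetrically'' understates the work required.
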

\begin{proof}
Using the fact that letters are Lyndon words and that $w$ is a product of two shorter Lyndon words, the reader readily verifies the claim for 
$n\leq 4$.
Hence we assume inductively that $n\geq 5$ and the claim holds for all values $<n$. 
Since $w$ is a Lyndon word, we have $w = \lambda_{w}\mu_{w} = \wa p_{w} \wb$ for some letters $\wa, \wb \in \calA$ with $\wa < \wb$.
We split the proof in several cases depending on the length of~$\lambda_{w}$.

\emph{Case (i)}. Suppose that $\nabs{\lambda_{w}} > F_{n-1} $. Since $n-1\geq 4$,  we may apply the induction assumption to $\lambda_{w}$,
which gives that $\calL(\lambda_{w}) > n - 1$, and so
\[
\calL(w) \geq \calL(\lambda_{w}) + 1 \geq n + 1 > n.
\]

\emph{Case (ii)}. Suppose that $\nabs{\lambda_{w}} = F_{n-1}$. Then $\nabs{\mu_{w}} \geq F_{n-2}$, and we have three subcases:

\emph{Case (ii-a)}. If $\mu_{w}$ is not a factor of $\lambda_{w}$, then the induction assumption implies
 \[
 \calL(w) \geq \calL(\lambda_{w}) + \# \nsset{w, \mu_{w}} \geq (n- 1)  + 2 > n.
 \]
 
\emph{Case (ii-b)}. Suppose that $\mu_{w}$ is a factor but not a suffix of $\lambda_{w}$. Then by denoting the longest Lyndon prefix  of~$\lambda_{w}$ 
by~$\lambda_{\lambda_{w}}$, we have $\nabs{\lambda_{\lambda_{w}}} \geq \nabs{\mu_{w}}$ because $\lambda_{w}^{\flat}$ is a periodic extension of $\lambda_{\lambda_{w}}$ by Lemma~\ref{pe 15-06-2012:0726} and $\mu_{w}$ is unbordered by Lemma~\ref{070820102200}. Therefore $\nabs{\lambda_{\lambda_{w}}} \geq F_{n-2}$. Since $n-2\geq 3$, we may apply the induction assumption to $\lambda_{\lambda_{w}}$, obtaining
\begin{equation}\label{150620122237}
\calL(w) \geq \calL(\lambda_{w}) + 1 \geq \calL(\lambda_{\lambda_{w}}) + 2  \geq (n-2) + 2 = n,
\end{equation}
where the last inequality is equality only if $\nabs{\mu_{w}} = \nabs{\lambda_{\lambda_{w}}} = F_{n-2}$.
This would imply that $\lambda_{\lambda_{w}}$ and $\mu_{w}$ are conjugates, which would further imply that  $\lambda_{\lambda_{w}} = \mu_{w}$ because both are Lyndon words. But then $\mu_{w}$ is both a prefix and a suffix of $w$, contradicting the fact that $w$ is unbordered. Hence the third inequality in~\eqref{150620122237} is strict.
 
\emph{Case (ii-c)}. Suppose that $\mu_{w}$ is a suffix of $\lambda_{w}$. 
Then it is  a suffix of~$p_{\lambda_{w}}\wb$ because~$\mu_{w}$ cannot equal $\lambda_{w}$.
Since $\nabs{\lambda_{w}} =  F_{n-1}$, the induction assumption gives
\[
 \calL(w) \geq \calL(\lambda_{w}) + 1 \geq (n - 1) + 1 = n.
\]
If $\calL(w) > n$, we are done, so assume $\calL(w) = n$; then  $\calL(\lambda_{w}) = n - 1$. 
We will show that $\wa p_{w}$ has period $F_{n-1}$ and $p_{w}\wb$ period $F_{n-2}$, which means that $w$ is a Fibonacci Lyndon word by Lemma~\ref{la140720121644}.

First, the word $\wa p_{w}$ has period $\nabs{\lambda_{w}} = F_{n-1}$ because $\wa p_{w} = w^{\flat}$ is a periodic extension of $\lambda_{w}$ by 
Lemma~\ref{pe 15-06-2012:0726}.
Second, since $\nabs{\lambda_{w}} = F_{n-1}$ and $\calL(\lambda_{w}) = n - 1$, the induction assumption implies that $\lambda_{w}$ is a Fibonacci Lyndon word
and therefore Lemma~\ref{pe 15-06-2012:0726} implies that $p_{\lambda_{w}}\wb$ has period either $F_{n-3}$ or $F_{n-2}$.
The period cannot be $F_{n-3}$, however, because the unbordered word $\mu_{w}$ is a suffix of $p_{\lambda_{w}}\wb$ and $\nabs{\mu_{w}} \geq F_{n-2} > F_{n-3}$.
Thus $p_{\lambda_{w}}\wb$  has period $F_{n-2}$, and furthermore $\nabs{\mu_{w}} = F_{n-2}$. Now the fact that $\mu_{w}$ is a suffix of $p_{\lambda_{w}}\wb$ with $\nabs{\mu_{w}} = F_{n-2}$ and that $p_{\lambda_{w}}\wb$ has period $F_{n-2}$ 
imply that $p_{w}\wb = p_{\lambda_{w}}\wb \mu_{w}$ has period $F_{n-2}$ by Lemma~\ref{080720121124}.

\emph{Case (iii)}. Suppose that $\nabs{w}/2 < \nabs{\lambda_{w}} < F_{n-1}$. Then $\nabs{\mu_{w}} = \nabs{w} - \nabs{\lambda_{w}}$
gives $F_{n-2} < \nabs{\mu_{w}} < \nabs{\lambda_{w}}$, so $\lambda_{w}$ is not a factor of $\mu_{w}$. Furthermore, since $n-2 \geq 3$, 
the induction assumption gives
\[
\calL(w) \geq \calL(\mu_{w}) + \# \nsset{w, \lambda_{w}} > (n-2) + 2 = n.
\]

\emph{Case (iv)}. Suppose that $\nabs{\lambda_{w}} = \nabs{w}/2$.  Then $\nabs{\mu_{w}} = \nabs{\lambda_{w}}$ and thus $\mu_{w}$ is not a factor of~$\lambda_{w}$ because otherwise $\mu_{w} = \lambda_{w}$ and $w$ would be bordered. Noting that $\nabs{\lambda_{w}} = \nabs{w}/2 > F_{n-2}$ because $n\geq4$, we therefore have
by induction
\[
\calL(w) \geq \calL(\lambda_{w}) + \# \nsset{w, \mu_{w}} > (n - 2) + 2 = n.
\]

\emph{Case (v)}. Suppose that $F_{n-2} < \nabs{\lambda_{w}} < \nabs{w}/2$. Then $\nabs{ \lambda_{w} } < \nabs{\mu_{w}}$,
and so $\mu_{w}$ is not a factor of $\lambda_{w}$. Since we also have $n - 2\geq 3$, the induction assumption gives
\[
\calL(w) \geq \calL(\lambda_{w}) + \# \nsset{w, \mu_{w}} > (n - 2) + 2 = n.
\]

\emph{Case (vi)}. Suppose that $\nabs{\lambda_{w}} = F_{n-2}$.
Then $\nabs{\mu_{w}} \geq F_{n-1}$, so the induction assumption gives
\begin{equation}\label{150620122235}
 \calL(w) \geq \calL(\mu_{w}) + 1 \geq (n - 1) + 1 = n.
\end{equation}
If $\calL(w) > n$, we are done, so assume $\calL(w) = n$. Our goal is to show that $\wa p_{w}$ has period $F_{n-2}$ and $p_{w}\wb$ has period $F_{n-1}$,
which means that $w$ is a Fibonacci Lyndon word by Lemma~\ref{la140720121644}.
The first objective is easy because $w^{\flat}=\wa p_{w}$ is a periodic extension of $\lambda_{w}$ by Lemma~\ref{pe 15-06-2012:0726}, and thus $\wa p_{w}$ has period $\nabs{\lambda_{w}} = F_{n-2}$. 

For the second objective, we take care of a special case first. If $n=5$, then $\nabs{\lambda_{w}} = 2$, so that $\lambda_{w} = \wa\wb$.
Then $w = \wa\wb\wa\wb\wb$ because  $w^{\flat}$ is a periodic extension of $\lambda_{w}$ by Lemma~\ref{pe 15-06-2012:0726}.
Consequently $p_{w} \wb = \word{babb}$ has period $F_{4}$, as claimed.
We may thus assume that $n\geq 6$.

Now, note that since $\calL(w) = n$ in Eq.~\eqref{150620122235}, we have $\calL(\mu_{w}) = n-1$. Since also $\nabs{\mu_{w}} \geq F_{n-1}$, 
the induction assumption says that actually $\nabs{\mu_{w}} = F_{n-1}$ and that 
$\mu_{w}$ is a Fibonacci Lyndon word. Thus Lemma~\ref{pe 15-06-2012:0726} implies that one of $\wa p_{\mu_{w}}$ and $p_{\mu_{w}}\wb$ has period $F_{n-3}$  and the other one has period $F_{n-2}$.
Since $ w ^{\flat} = \lambda_{w} \wa p_{\mu_{w}}$ is a periodic extension of $\lambda_{w}$ and $\nabs{\lambda_{w}} < \nabs{\mu_{w}}$, we see that $\lambda_{w}$ is a prefix of $\wa p_{\mu_{w}}$. Therefore $\wa p_{\mu_{w}}$ cannot have period $F_{n-3}$ because $F_{n-3} < \nabs{\lambda_{w}}$ and $\lambda_{w}$ is unbordered.
Hence $\wa p_{\mu_{w}}$ has period $F_{n-2}$ and $p_{\mu_{w}}\wb$ has period~$F_{n-3}$.
Since $n \geq 6$, we have $\nabs{p_{\lambda_{w}} \wb\wa} = F_{n-2} \leq F_{n-1} - 2 = \nabs{p_{\mu_{w}}}$, and consequently since 
$p_{w} = p_{\lambda_{w}}\wb\wa p_{\mu_{w}}$ and  $p_{w}$ has period $F_{n-2}$ (because $\wa p_{w}$ has period $F_{n-2}$), it follows that $p_{\lambda_{w}} \wb\wa$ is a prefix of $p_{\mu_{w}}$.  
Since $p_{\mu_{w}}$ has periods $F_{n-3}$ and $F_{n-2}$, Lemma~\ref{080720121124}  implies that 
$p_{w}$ has periods $F_{n-2}$ and $F_{n-2} + F_{n-3} = F_{n-1}$. Now, as we have reasoned before, $p_{w}\wb$ must have period either $F_{n-2}$ or $F_{n-1}$ for otherwise $p_{w}\wa$ would have both periods contradicting Lemma~\ref{020720122145}. Since $\wa p_{w}$ has period $F_{n-2}$ and $w$ is unbordered, we conclude that $p_{w}\wb$ must have period $F_{n-1}$.

\emph{Case (vii)}. Suppose that $\nabs{\lambda_{w}} < F_{n-2}$. Then $\nabs{\mu_{w}} > F_{n-1}$, so that the induction assumption implies
\[
\calL(w) \geq \calL(\mu_{w}) + 1 > (n-1)  + 1 = n.
\]

\end{proof}

\begin{theorem}\label{ti 10-07-2012:1247}
Let $w \in \calA^{+}$ be a Lyndon word with $\nabs{w} \geq F_{n}$ for some $n\geq 3$.
Then $\calL(w) \geq n$ with equality if and only if $w$ is a Fibonacci Lyndon word of length~$F_{n}$.
\end{theorem}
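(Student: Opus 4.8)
The plan is to obtain the theorem directly from the two preceding lemmas, which between them already carry all the weight. First, the inequality $\calL(w) \geq n$ is nothing but the first assertion of Lemma~\ref{160620121145}, so there is nothing to prove there.

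For the equality characterization I would handle the two directions separately. If $\calL(w) = n$, then the second assertion of Lemma~\ref{160620121145} tells us at once that $w$ is a Fibonacci Lyndon word of length $F_{n}$; this settles the ``only if'' direction with no extra argument. Conversely, if $w$ is a Fibonacci Lyndon word of length $F_{n}$, then since $n \geq 3$ (and the hypothesis $\nabs{w} \geq F_{n}$ holds, with equality), Lemma~\ref{020720122029} gives $\calL(w) = n$, which is the ``if'' direction.

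The one remark I would make explicit is that the equality $\calL(w) = n$ in fact forces $\nabs{w} = F_{n}$, not merely $\nabs{w} \geq F_{n}$: if we had $\nabs{w} \geq F_{n+1}$, then applying Lemma~\ref{160620121145} with $n+1$ in place of $n$ (legitimate since $n+1 \geq 4 \geq 3$) would yield $\calL(w) \geq n+1$, a contradiction. Hence $F_{n} \leq \nabs{w} < F_{n+1}$ whenever $\calL(w) = n$, which is consistent with the conclusion that $w$ is then a Fibonacci Lyndon word of length $F_{n}$.

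I do not expect any genuine obstacle in this theorem, because the difficulty lies entirely in Lemma~\ref{160620121145} --- specifically in its seven-case induction on $n$, whose sharp cases are $\nabs{\lambda_{w}} = F_{n-1}$ with $\mu_{w}$ a suffix of $\lambda_{w}$ (Case (ii-c)) and $\nabs{\lambda_{w}} = F_{n-2}$ (Case (vi)). In those cases one must combine the period propagation of Lemma~\ref{080720121124}, the Fine and Wilf theorem (Lemma~\ref{020720122145}), and the period characterization of Fibonacci Lyndon words (Lemma~\ref{la140720121644}) in order to recognize $w$ itself as a Fibonacci Lyndon word of length $F_{n}$. Once that machinery is available, Theorem~\ref{ti 10-07-2012:1247} follows in a single line by packaging Lemmas~\ref{020720122029} and~\ref{160620121145} together.
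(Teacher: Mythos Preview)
Your proposal is correct and matches the paper's own proof exactly: the theorem is obtained simply by combining Lemmas~\ref{020720122029} and~\ref{160620121145}. Your extra remark that $\calL(w)=n$ forces $F_{n}\leq\nabs{w}<F_{n+1}$ is fine but redundant, since Lemma~\ref{160620121145} already concludes that $w$ has length exactly $F_{n}$.
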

\begin{proof}
The claim is obtained by combining Lemmas~\ref{020720122029} and~\ref{160620121145}.
\end{proof}

%\begin{corollary}\label{ti 17-07-2012:1504}
% If $w \in \calA^{+}$ is a Lyndon word, then $\calL(w) > \log_{\phi}\nabs{w}$, where $\phi = (1 + \sqrt{5})/2$ is the golden ratio.
%\end{corollary}

Recall that $\phi$ denotes the golden ratio $(1 + \sqrt{5})/2$.
\begin{corollary}\label{ti 17-07-2012:1504}
 If $w \in \calA^{+}$ is a Lyndon word, then $\calL(w) \geq \bceil{\log_{\phi}\nabs{w}} + 1$ with equality if $w$ is a Fibonacci Lyndon word.
\end{corollary}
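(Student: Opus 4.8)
The plan is to deduce the corollary from Theorem~\ref{ti 10-07-2012:1247} by locating $\nabs{w}$ between two consecutive Fibonacci numbers. The only nonroutine ingredient is the elementary two-sided estimate
\[
\phi^{k-2} < F_{k} < \phi^{k-1} \qquad (k \geq 3),
\]
which I would prove first, for instance by induction using the identity $\phi^{2} = \phi + 1$ (so that $\phi^{k-1} = \phi^{k-2} + \phi^{k-3}$ mirrors $F_{k} = F_{k-1} + F_{k-2}$), or directly from Binet's formula $F_{k} = (\phi^{k} - \psi^{k})/\sqrt{5}$ with $\psi = (1 - \sqrt{5})/2$.

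If $\nabs{w} = 1$ then $\calL(w) = 1 = \bceil{\log_{\phi} 1} + 1$ and there is nothing to prove, so assume $\nabs{w} \geq 2$ and let $n \geq 3$ be the unique integer with $F_{n} \leq \nabs{w} < F_{n+1}$. Theorem~\ref{ti 10-07-2012:1247} gives $\calL(w) \geq n$, and if moreover $\nabs{w} > F_{n}$, then $w$ is not a Fibonacci Lyndon word of length $F_{n}$ (it has the wrong length), so the equality clause of that theorem upgrades the bound to $\calL(w) \geq n + 1$. On the other hand, the displayed estimate lets me bound $\bceil{\log_{\phi}\nabs{w}} + 1$ from above: if $\nabs{w} = F_{n}$, then $\phi^{n-2} < \nabs{w} < \phi^{n-1}$, hence $\bceil{\log_{\phi}\nabs{w}} = n - 1$ and $\bceil{\log_{\phi}\nabs{w}} + 1 = n \leq \calL(w)$; and if $F_{n} < \nabs{w} < F_{n+1}$, then $\nabs{w} < F_{n+1} < \phi^{n}$, hence $\bceil{\log_{\phi}\nabs{w}} \leq n$ and $\bceil{\log_{\phi}\nabs{w}} + 1 \leq n + 1 \leq \calL(w)$. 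Either way the asserted inequality holds. For the equality statement, if $w$ is a Fibonacci Lyndon word, then $\nabs{w} = F_{n}$ with $n \geq 3$ and $\calL(w) = n$ by Lemma~\ref{020720122029}, while the computation just made gives $\bceil{\log_{\phi}\nabs{w}} + 1 = n$; hence equality.

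The subtle point is that the crude bound ``$F_{n} \leq \nabs{w}$ implies $\calL(w) \geq n$'' is by itself not strong enough: for example $\nabs{w} = 7$ satisfies $F_{5} = 5 \leq 7$ but $\bceil{\log_{\phi} 7} + 1 = 6$, so one genuinely needs the sharp equality case of Theorem~\ref{ti 10-07-2012:1247} to promote $n$ to $n + 1$ whenever $\nabs{w}$ lies strictly between consecutive Fibonacci numbers. The other place to be careful is the boundary case $\nabs{w} = F_{n}$, where matching the ceiling exactly requires the lower estimate $F_{n} > \phi^{n-2}$ and not only $F_{n} < \phi^{n-1}$; both halves of the displayed inequality are used.
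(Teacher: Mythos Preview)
Your proof is correct and follows essentially the same route as the paper's: locate $\nabs{w}$ between consecutive Fibonacci numbers, invoke Theorem~\ref{ti 10-07-2012:1247} (including its equality clause to get $\calL(w)\geq n+1$ when $w$ is not Fibonacci Lyndon of length $F_n$), and convert to the logarithmic bound via $\phi^{k-2}<F_k<\phi^{k-1}$. The only cosmetic difference is that the paper splits on ``$w$ Fibonacci Lyndon'' versus ``not'', whereas you split on $\nabs{w}=F_n$ versus $F_n<\nabs{w}<F_{n+1}$; these amount to the same computation, and your remark on why the naive bound $\calL(w)\geq n$ alone is insufficient is exactly the point.
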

\begin{proof}
The claim is trivial if $\nabs{w} = 1$, so suppose that $\nabs{w} \geq 2$, and 
let $n\geq3$ be the unique integer for which $F_{n} \leq \nabs{w} < F_{n+1}$.

It is well-known~\cite{Knuth} that $\phi^{m-1} < F_{m+1} < \phi^{m}$ for all $m\geq 2$, and this implies $\bceil{\log_{\phi}F_{m+1}} = m$.
Therefore, if $w$ is a Fibonacci Lyndon word, then $\nabs{w} = F_{n}$ and $\calL(w) = n = \bceil{\log_{\phi}F_{n}} + 1$ by Theorem~\ref{ti 10-07-2012:1247}.
If $w$ is not a Fibonacci Lyndon word, then on the one hand, we have $\calL(w) \geq  n+1$ by Theorem~\ref{ti 10-07-2012:1247}.
On the other hand, we have 
\[
\bceil{\log_{\phi}\nabs{w}} \leq  \bceil{\log_{\phi}F_{n+1}}= n.
\]
Combining these two  inequalities gives $\calL(w) \geq \bceil{\log_{\phi}\nabs{w}} + 1$.
\end{proof}

\begin{remark}
A noteworthy feature of Theorem~\ref{ti 10-07-2012:1247} is that the optimal words, the Fibonacci Lyndon words, are made of just two different letters.
A priori it may seem ``obvious'' that this should always be the case for a Lyndon word having the smallest possible number of Lyndon factors, 
but this, in fact, is not true.
For example, each Lyndon word of length 6 has at least 7 Lyndon factors. In this case the Lyndon words with the smallest number of Lyndon factors are, up to renaming the letters, 
\[
\word{000001}\quad\word{000101}\quad\word{001101}\quad\word{010111}\quad\word{010102}\quad\word{010202}\quad\word{021022}\quad\word{011111},
\]
three of which are made of three different letters. However, see Conjecture~\ref{020720121320}.
\end{remark}

\begin{conjecture}\label{020720121320}
Let us denote
\[
\ell(n) = \min \bset{\calL(w)}{ \text{$w$ is a Lyndon word with $\nabs{w} = n$}}
\]
for all $n\geq1$. We conjecture that if $w$ is a Lyndon word with $\nabs{w} \neq 6$ and $\calL(w)= \ell(\nabs{w})$, then $w$ is a \emph{Sturmian Lyndon word}, i.e., 
we have $w \in \{\wa, \wb\}^{+}$, $w = \wa p_{w} \wb$, and $p_{w}$ is a central word.
%If this is true, then it can be shown that
%\[
%\ell(n) = \min \Bset{\sum_{i=0}^{k} a_{i} }{\text{there exists an integer $q$ such that $n/q = [a_{0}; a_{1}, \ldots, a_{k}]$}}.
%\]
%The expression $[a_{0}; a_{1}, \ldots, a_{k}]$ above denotes the continued fraction expansion of $n/q$.
\end{conjecture}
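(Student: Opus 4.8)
The natural approach is a strong induction on $n:=\nabs{w}$, proving together with the statement the companion fact $\ell(n)=2+s(n)$, where
\[
s(n):=\min\bset{\sum_{i}a_{i}}{[a_{0};a_{1},\dots,a_{k}]=a/b,\ \gcd(a,b)=1,\ a+b=n},
\]
the minimum being realized by the ``most Fibonacci-like'' reduced fraction of size $n$, whose continued fraction has the shape $[0;1,1,\dots,1,k]$. (For $n=F_{m}$ one gets $s(n)=m-2$, recovering Theorem~\ref{ti 10-07-2012:1247}.) The inductive machine is the factorization $w=\lambda_{w}\mu_{w}$ of Lemma~\ref{pe 15-06-2012:0726} together with the bookkeeping identity $\calL(w)=\calL(\lambda_{w})+\delta(w)$, where $\delta(w)$ is the number of Lyndon factors of $w$ that are not factors of $\lambda_{w}$; every such factor ends strictly to the right of the cut between $\lambda_{w}$ and $\mu_{w}$, so $\delta(w)\geq 1$ (with $w$ always counted), and $\delta(w)\geq 2$ whenever $\mu_{w}$ is not a factor of $\lambda_{w}$. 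The symmetric identity $\calL(w)=\calL(\mu_{w})+\delta'(w)$ lets one always argue from whichever of $\lambda_{w},\mu_{w}$ is longer.

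The first ingredient is the structure of Sturmian Lyndon words, extending Lemmas~\ref{la140720121644} and~\ref{ma 16-07-2012:1209} from the Fibonacci case to arbitrary central words: the Lyndon factors of $w=\wa p_{w}\wb$ are exactly $\wa$, $\wb$, and the Sturmian Lyndon words lying on the Stern--Brocot path of $w$ --- equivalently those built from the central words whose periods run through the convergents of $q/q'$, where $q<q'$ are the periods of $p_{w}$. Consequently $\calL(w)=2+\sigma(w)$ with $\sigma(w)$ the sum of the partial quotients of $q/q'$, so among Sturmian Lyndon words of length $n$ the minimum of $\calL$ is $2+s(n)$, attained exactly on those of optimal slope. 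With this in hand, the theorem reduces to the lower bound $\calL(w)\geq 2+s(n)$ for every Lyndon word $w$ of length $n$, together with the assertion that for $n\neq 6$ equality forces $w$ to be Sturmian.

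For the inductive step one distinguishes two situations. If $\mu_{w}$ (taken to be the shorter of the two halves) is not a factor of $\lambda_{w}$, then $\calL(w)\geq\calL(\lambda_{w})+2\geq 4+s(\nabs{\lambda_{w}})$, and one must show this exceeds $2+s(n)$; because $s$ is only roughly logarithmic and not monotone, the crude estimate via $\nabs{\lambda_{w}}\geq n/2$ is not by itself enough, and one needs a sharper bound showing that $\delta(w)$ in fact \emph{grows} with the amount by which $w$ deviates from being a Christoffel word --- for instance by an induction that records, for every prefix of $w$, the periods it carries. The large-alphabet case is subsumed here, since if $w$ uses three or more letters then either $\lambda_{w}$ or $\mu_{w}$ does and the induction hypothesis applies, or $w=\lambda_{w}\wa^{j}\wb$ with $\wb$ a largest letter not occurring in $\lambda_{w}$, and then $\wb,\wa\wb,\dots,\wa^{j}\wb$ are Lyndon factors of $w$ absent from $\lambda_{w}$ and $\delta(w)\geq j+2$. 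In the remaining situation $\mu_{w}$ is a factor of $\lambda_{w}$, $w$ is the only Lyndon factor straddling the cut, and $\calL(w)=\calL(\lambda_{w})+1$; matching $\calL(\lambda_{w})=\ell(n)-1$ against the inductive lower bound, and using the continued-fraction inequality $s(\nabs{\lambda_{w}})\geq s(n)-1$ (which holds exactly for $n\neq 6$), forces $\lambda_{w}$ to be a Sturmian Lyndon word, so that one can run the period and Fine--Wilf arguments of Lemma~\ref{160620121145}, Cases (ii-c) and (vi) --- with $F_{n-1},F_{n-2}$ replaced by $\nabs{\lambda_{w}},\nabs{\mu_{w}}$ --- to conclude that $\wa p_{w}$ and $p_{w}\wb$ have periods $\nabs{\lambda_{w}}$ and $\nabs{\mu_{w}}$; since $w$ is unbordered (Lemma~\ref{070820102200}), Lemma~\ref{020720122145} then makes these periods coprime and $p_{w}$ central, i.e.\ $w$ is Sturmian.

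The heart of the difficulty, and the reason this remains a conjecture, is that $s$ (equivalently $\ell$) is not monotone and its only upward jump larger than $1$ occurs at $n=6$: the number of central words of length $m$ is $\varphi(m+2)$, which equals $2$ only for $m+2\in\{3,4,6\}$, so $m=4$ is the sole $m\geq 3$ with just two central words of that length, making $\wa^{5}\wb$ and $\wa\wb^{5}$ the only Sturmian Lyndon words of length $6$ (each with $\calL=7$) and leaving exactly the slack that the three sporadic ternary minimizers of the Remark above take up. Concretely, the three steps flagged above --- the structural description of the Lyndon factors of an \emph{arbitrary} Sturmian Lyndon word (the technical linchpin, since without the rigidity of consecutive Fibonacci numbers every admissible split $\nabs{\lambda_{w}}+\nabs{\mu_{w}}=n$ must be handled), the growth estimate for $\delta(w)$ in the non-factor case, and the continued-fraction inequality $s(\nabs{\lambda_{w}})\geq s(n)-1$ --- are each nontrivial, and a finite check of small lengths would then close the argument.
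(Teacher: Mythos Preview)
There is nothing to compare against: in the paper this statement is a \emph{conjecture}, and the paper offers no proof of it. Your submission is likewise not a proof but, as you label it, a ``proof strategy,'' and you yourself flag the three unresolved steps --- the structural description of the Lyndon factors of an arbitrary Sturmian Lyndon word, the growth estimate for $\delta(w)$ when $\mu_{w}$ is not a factor of $\lambda_{w}$, and the continued-fraction inequality $s(\nabs{\lambda_{w}})\geq s(n)-1$ --- as ``each nontrivial'' and ``the reason this remains a conjecture.'' So you and the paper are in agreement that the statement is open.

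As an outline the approach is sensible: the reformulation $\ell(n)=2+s(n)$ via sums of partial quotients and the Stern--Brocot description of the Lyndon factors of a Christoffel word are natural (and the latter is essentially in the literature around Berstel--de~Luca), and your case split on whether $\mu_{w}$ is a factor of $\lambda_{w}$ correctly mirrors the architecture of the paper's proof of Lemma~\ref{160620121145}. Two cautions, though. First, the claim that $s(\nabs{\lambda_{w}})\geq s(n)-1$ ``holds exactly for $n\neq 6$'' is a statement about \emph{all} values $\nabs{\lambda_{w}}$ can take, not only the ones arising from Sturmian splittings; it does not follow from the $\varphi(m+2)=2$ count alone, which only locates the Sturmian minimizers. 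Second, in the non-factor case your bound $\delta(w)\geq 2$ combined with the non-monotonicity of $s$ is genuinely too weak, and the stronger ``$\delta(w)$ grows with the deviation from Christoffel'' assertion you invoke is exactly the missing idea --- you have named it but not supplied it. Until those pieces are filled in, this remains a plausible plan of attack rather than a proof, which is precisely the status the paper assigns to the statement.
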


\section{Lyndon factors of recurrent words}\label{ti 17-07-2012 21:23}

Our main goal in this section is to prove Theorem~\ref{060720122147}, which gives a characterization for aperiodicity of recurrent words by means of its Lyndon factors
using the Fibonacci infinite word. We begin with a few results that are interesting in their own right.

\begin{lemma}[Siromoney, Mathew, Dare, and Subramanian~\cite{SirMatDarSub1994}]\label{060720121742}
Every infinite word $\bfx \in\calA^{\N}$ admits a unique factorization of the form  either 
\[
\bfx = \prod_{i\geq 1} w_{i} \qqtext{or}  \bfx = w_{1} w_{2} \cdots w_{n} \bfx',
\]
where each $w_{i}\in\calA^{+}$ is a finite Lyndon word with $w_{i} \geq w_{i+1}$ and $\bfx' \in\calA^{\N}$ begins with arbitrarily long Lyndon words.
\end{lemma}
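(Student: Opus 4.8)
The plan is to obtain existence by a greedy peeling construction and uniqueness by showing each factor is forced, the first one being the longest Lyndon prefix of~$\bfx$. \emph{Existence.} Put $\bfx_{0} = \bfx$ and proceed recursively: given $\bfx_{i-1}$, if it begins with arbitrarily long Lyndon words, stop and set $n = i-1$, $\bfx' = \bfx_{i-1}$; otherwise the set of lengths of Lyndon prefixes of $\bfx_{i-1}$ is nonempty (it contains $1$, since the first letter is a Lyndon word) and bounded, so it has a largest element, and we let $w_{i}$ be the Lyndon prefix of $\bfx_{i-1}$ of that maximal length and set $\bfx_{i} = w_{i}^{-1}\bfx_{i-1}$. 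If the process stops at step~$n$ we obtain the second form $\bfx = w_{1}\cdots w_{n}\bfx'$ (with $n=0$ meaning $\bfx = \bfx'$); if it never stops we obtain the first form $\bfx = \prod_{i\geq 1}w_{i}$, which really does equal $\bfx$ because each $w_{i}$ is nonempty. In both forms $w_{i}\geq w_{i+1}$: otherwise $w_{i} < w_{i+1}$, so $w_{i}w_{i+1}$ is Lyndon by Lemma~\ref{070820102200}(iii) and is a Lyndon prefix of $w_{i}\bfx_{i} = \bfx_{i-1}$ longer than $w_{i}$, against maximality. The same argument gives, in the second form, that $w_{n}$ is lexicographically larger than the infinite word~$\bfx'$ (if $q$ were a Lyndon prefix of $\bfx'$ with $\nabs q > \nabs{w_{n}}$ and $w_{n} < q$, then $w_{n}q$ would be a Lyndon prefix of $\bfx_{n-1}$ longer than $w_{n}$). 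I treat this extra property $w_{n} > \bfx'$ as part of the statement: it is automatic for the factorization just built, it amounts to requiring the~$w_{i}$ as long as possible, and without it the second form is not literally unique (a trailing factor that is a prefix of $\bfx'$ could be absorbed into $\bfx'$).

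\emph{A finite claim.} If $g_{1}\geq g_{2}\geq\cdots\geq g_{m}$ are Lyndon words, then no Lyndon word is a prefix of $g_{1}g_{2}\cdots g_{m}$ of length exceeding $\nabs{g_{1}}$. Indeed, suppose $u$ is such a prefix; since $\nabs u > \nabs{g_{1}}$ we may write $u = g_{1}\cdots g_{k-1}r$ with $k\geq2$ and $r$ a nonempty prefix of $g_{k}$, so that $r$ is a proper nonempty suffix of $u$. As $u$ is Lyndon it is unbordered (Lemma~\ref{070820102200}(i)) and $u < r$ (Lemma~\ref{070820102200}(ii)). If $g_{k}$ is a prefix of $g_{1}$, then $r$ is a prefix of $g_{1}$, hence a nonempty proper border of~$u$, a contradiction. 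Otherwise $g_{1}$ and $g_{k}$ first differ at some position~$t$ with $g_{1}[t] > g_{k}[t]$; if $t\leq\nabs r$ this forces $r < u$, contradicting $u < r$, and if $t > \nabs r$ then $r$ is again a prefix of $g_{1}$ and hence a border of~$u$, a contradiction.

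\emph{Uniqueness.} Let $\bfx = w_{1}w_{2}\cdots$ be any factorization as in the (amended) statement. If $\bfx$ begins with arbitrarily long Lyndon words there is no first factor and the factorization is $\bfx = \bfx'$; otherwise I claim $w_{1}$ is the longest Lyndon prefix of $\bfx$. Let $u$ be a Lyndon prefix with $\nabs u > \nabs{w_{1}}$. If $u$ ends inside $w_{1}w_{2}\cdots$ then $u$ is a prefix of some $w_{1}\cdots w_{m}$ and the finite claim gives a contradiction. The remaining possibility, in the second form (where now $n\geq1$), is $u = w_{1}\cdots w_{n}r'$ with $r'$ a nonempty prefix of~$\bfx'$; since $\bfx'$ begins with arbitrarily long Lyndon words, pick a Lyndon prefix $q$ of $\bfx'$ with $\nabs q > \nabs u$, so $r'$ is a proper prefix of $q$ and $q < \bfx' < w_{n}$ (using that $q$ is a proper prefix of the infinite word $\bfx'$, and $w_n>\bfx'$). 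Then $w_{1}\geq\cdots\geq w_{n} > q$, so applying the finite claim to $v := w_{1}\cdots w_{n}q$ (a prefix of $\bfx$) shows every Lyndon prefix of $v$ has length $\leq\nabs{w_{1}}$; but $u$ is a Lyndon prefix of $v$ (it is a prefix of $\bfx$ of length $\nabs{w_1\cdots w_n}+\nabs{r'}<\nabs v$) of length $>\nabs{w_{1}}$, a contradiction. Hence $w_{1}$ is forced; stripping it and repeating forces every $w_{i}$ and the tail, and the process passes to the tail exactly when the remaining word first begins with arbitrarily long Lyndon words — which also shows the two forms cannot both occur.

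The step I expect to be the main obstacle is precisely this last case, where a candidate Lyndon prefix runs past $w_{1}\cdots w_{n}$ into the tail~$\bfx'$: handling it cleanly is what pins down the maximality condition $w_{n} > \bfx'$, and the device of padding the finite part with a long Lyndon prefix of $\bfx'$ to fall back on the finite claim is the one genuinely delicate point. Everything else is bookkeeping together with the three standard facts about Lyndon words recorded in Lemma~\ref{070820102200}.
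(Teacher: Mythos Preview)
The paper does not prove this lemma: it is quoted from Siromoney, Mathew, Dare, and Subramanian~\cite{SirMatDarSub1994} and used as a black box. So there is no ``paper's own proof'' to compare against.

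Your argument is essentially the standard one and is correct. The greedy construction for existence is fine, and your \emph{finite claim}---that a nonincreasing product $g_{1}\cdots g_{m}$ of Lyndon words has no Lyndon prefix longer than $g_{1}$---is the right engine for uniqueness; your case split (whether $g_{k}$ is a prefix of $g_{1}$ or they first differ at some position) covers all possibilities, and the edge case $u=g_{1}\cdots g_{m}$ is handled by taking $k=m$, $r=g_{m}$.

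Your observation that the stated form is not literally unique without the side condition $w_{n}>\bfx'$ is genuine and worth recording. For instance, with $\bfx' = 011\,0111\,01111\cdots$ one checks that both $\bfx'$ and $\bfx = 01\cdot\bfx'$ begin with arbitrarily long Lyndon words, so $\bfx$ admits the second form with $n=0$ and also with $n=1$, $w_{1}=01$. Adding the requirement that $w_{n}$ be lexicographically larger than~$\bfx'$ (equivalently, that each $w_{i}$ be the \emph{longest} Lyndon prefix of the remaining tail) removes the ambiguity, and your greedy construction automatically produces a factorization with this property. This is presumably implicit in the original source; in any case the paper only uses the existence of \emph{some} such factorization (in Lemma~\ref{070720121709}), where uniqueness plays no role.

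One small wording issue: in the step ``$q<\bfx'<w_{n}$'' you are comparing a finite word, an infinite word, and a finite word. This is fine under the usual extension of lexicographic order to $\calA^{*}\cup\calA^{\N}$, but it would be cleaner to argue directly that $w_{n}\geq q$ for every sufficiently long Lyndon prefix $q$ of $\bfx'$, which is exactly what your existence paragraph already establishes and is all the finite claim needs.
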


\begin{lemma}\label{070720121709}
Let $\bfx \in \calA^{\N}$ be an infinite word. Then at least one of the following holds:
\begin{enumerate}[(i)]
\item $\bfx$ is ultimately periodic;
\item $\bfx$ has a tail that begins with arbitrarily long Lyndon words;
\item $\bfx = \prod_{i\geq 1}w_{i}$, where each $w_{i} \in \calA^{+}$ is a Lyndon word and, for every $k\geq1$, there exists an index $i_{k}$ such that $\nabs{w_{i}} > k$ for all $ i > i_{k}$.
\end{enumerate}
\end{lemma}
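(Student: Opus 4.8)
The plan is to start from the Lyndon factorization of Lemma~\ref{060720121742} and run a short case analysis, first on the shape of that factorization and then on the behaviour of the lengths of its blocks.

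First I would dispose of the case where the factorization has the second form, $\bfx = w_{1} w_{2} \cdots w_{n} \bfx'$ with $\bfx' \in \calA^{\N}$ beginning with arbitrarily long Lyndon words. Here $\bfx'$ is by definition a tail of $\bfx$, so alternative (ii) holds immediately. Hence I may assume the factorization has the first form, $\bfx = \prod_{i\geq 1} w_{i}$ with each $w_{i}$ a nonempty Lyndon word and $w_{i} \geq w_{i+1}$ lexicographically. I then split according to whether the sequence of lengths $\bigl(\nabs{w_{i}}\bigr)_{i\geq1}$ is bounded. If it is bounded, say $\nabs{w_{i}} \leq \ell$ for all $i$, then all the $w_{i}$ lie in the finite set of Lyndon words of length at most $\ell$; since $(w_{i})_{i\geq1}$ is non-increasing in a finite totally ordered set, it is eventually constant, say $w_{i} = w$ for all $i \geq N$, so $\bfx = (w_{1}\cdots w_{N-1})\, w^{\omega}$ is ultimately periodic and alternative (i) holds.

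It remains to treat the case where $\bigl(\nabs{w_{i}}\bigr)$ is unbounded, and here I would prove that in fact $\nabs{w_{i}} \to \infty$, which is precisely alternative (iii) for the blocks $w_{i}$. Suppose not: then $\nabs{w_{i}} \leq k$ for some fixed $k$ and for infinitely many indices $i_{1} < i_{2} < \cdots$. The subsequence $(w_{i_{m}})_{m\geq1}$ is non-increasing and lies in the finite set of Lyndon words of length at most $k$, hence is eventually constant, equal to some Lyndon word $w$ for all $m \geq M$. Then for any index $j \geq i_{M}$, choosing $m \geq M$ with $i_{m} \geq j$ gives $w = w_{i_{M}} \geq w_{j} \geq w_{i_{m}} = w$, forcing $w_{j} = w$ and hence $\nabs{w_{j}} \leq k$ for all $j \geq i_{M}$ — contradicting unboundedness of the lengths. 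This establishes $\nabs{w_{i}}\to\infty$ and completes the proof.

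The main obstacle is exactly this last step: upgrading ``the lengths are unbounded'' to ``the lengths tend to infinity.'' This is where the monotonicity $w_{i}\geq w_{i+1}$ from Lemma~\ref{060720121742} is essential — a subsequence of short blocks is trapped in a finite set, must stabilize, and then the sandwich inequality forces every later block to coincide with that stable value; without invoking monotonicity the implication would simply be false.
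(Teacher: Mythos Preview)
Your proof is correct and follows essentially the same approach as the paper: both start from the infinite Lyndon factorization of Lemma~\ref{060720121742} and exploit the monotonicity $w_{i}\geq w_{i+1}$ together with finiteness of the alphabet to control the block lengths. The paper assumes (i) and (ii) fail and, for each $k$, takes the lexicographic minimum among the blocks of length $\leq k$ and its last index of occurrence, whereas you split on bounded versus unbounded block lengths and use a sandwich argument on a stabilizing subsequence of short blocks; the underlying mechanism is the same.
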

\begin{proof}
Let us suppose that $\bfx$ is aperiodic and that it does not have a tail beginning with arbitrarily long Lyndon words;
we will show that then $\bfx$ satisfies property (iii). Let $k\geq1$. Lemma~\ref{060720121742} implies that $\bfx$ admits a factorization $\bfx = \prod_{i\geq 1} w_{i}$ in which the $w_{i} \in \calA^{+}$ are Lyndon words and $w_{i} \geq w_{i+1}$. 
Denote $w = \min \{ w_{i} \, \colon \,  \nabs{w_{i}} \leq k \}$; this word exists because our alphabet $\calA$ is finite. 
Observe that if $w_{i} > w_{i+1}$, then $w_{i} \neq w_{j}$ for all $j > i$.
Furthermore, since $\bfx$ is aperiodic, the sequence of words $w_{i}$ is not ultimately constant.
Consequently, there exists an index
\[
i_{k} = \max\bigl\{j \, \colon \,   w_{j} = w \bigr\}.
\]
%In other words, $i_{k}$ is the largest index such that $w_{i_{k}}$ is the least word among the words $w_{i}$ of length  $\nabs{w_{i}} \leq k$
%(there are only finitely many of them because $\calA$ is finite).
Then $\nabs{w_{i}} > k$ whenever $i > i_{k}$. Indeed, if $i > i_{k}$, then the inequality $w_{i_{k}} \geq w_{i}$ and the maximality of $i_{k}$ imply $w_{i_{k}} > w_{i}$. Because of the minimality of $w_{i_{k}}$, we thus have $\nabs{w_{i}} > k$.
\end{proof}

\begin{corollary}\label{100620121216}
If an infinite word $\bfx \in\calA^{\N}$ has only finitely many distinct Lyndon factors, then it is ultimately periodic.
\end{corollary}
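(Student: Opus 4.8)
The plan is to read off the conclusion directly from the trichotomy in Lemma~\ref{070720121709}, by checking that the hypothesis ``$\bfx$ has only finitely many distinct Lyndon factors'' is incompatible with alternatives (ii) and (iii), so that alternative (i) — ultimate periodicity — is forced.

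First I would record the trivial bookkeeping that makes the argument go. If $\bfx$ has only finitely many distinct Lyndon factors, then there is an absolute constant $N$ such that every Lyndon factor of $\bfx$ has length at most $N$ (words of different lengths are distinct, so an unbounded supply of lengths would force an unbounded supply of words). I would also note that any Lyndon word appearing as a prefix of a \emph{tail} of $\bfx$ is a factor of $\bfx$, and any of the Lyndon words $w_i$ in a factorization $\bfx = \prod_{i\geq1} w_i$ is a factor of $\bfx$; both facts are immediate from the definitions of ``tail'' and of the product factorization.

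Then I apply Lemma~\ref{070720121709}. In case (ii), some tail of $\bfx$ begins with Lyndon words of arbitrarily large length, in particular one of length exceeding $N$, contradicting the choice of $N$. In case (iii), applying the defining property of that case with $k = N$ yields an index $i_N$ such that $\nabs{w_i} > N$ for all $i > i_N$; any such $w_i$ is then a Lyndon factor of $\bfx$ of length greater than $N$, again a contradiction. Hence neither (ii) nor (iii) can hold, so (i) holds and $\bfx$ is ultimately periodic.

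There is essentially no obstacle: the substantive work has already been done in Lemma~\ref{070720121709}, and the corollary is just its contrapositive under the boundedness hypothesis. The only point that warrants a sentence of care is confirming that the tail-prefixes in (ii) and the blocks $w_i$ in (iii) genuinely occur as factors of $\bfx$, so that ``arbitrarily long Lyndon words occur'' really does translate into ``infinitely many distinct Lyndon factors''; this is routine from the definitions.
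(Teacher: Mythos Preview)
Your proof is correct and follows exactly the paper's approach: apply Lemma~\ref{070720121709} and rule out alternatives (ii) and (iii) because each forces arbitrarily long Lyndon factors, leaving (i). The paper's proof simply states this in one line without spelling out the bound~$N$, but the argument is the same.
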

\begin{proof}
If $\bfx$ satisfies property (ii) or (iii) in Lemma~\ref{070720121709}, then it clearly has infinitely many Lyndon factors.
Thus $\bfx$ satisfies property (i) and is ultimately periodic.
\end{proof}

\begin{remark}
The sleek proof of Corollary~\ref{100620121216} was suggested by Tero Harju. The author's original proof was more intricate.
\end{remark}

Recall that we denote the set of factors of an infinite word $\bfx$ by $F(\bfx)$.
In what follows, we also denote the set of Lyndon factors of $\bfx$ by $L(\bfx)$. 
(Don't confuse this with the symbol $\calL(w)$ defined in Section~\ref{to 15-11-2012 14:27 winnipeg}.)

\begin{theorem}\label{060720122059}
If $\bfx$ and $\bfy$ are recurrent infinite words and $L(\bfx) = L(\bfy)$, then $F(\bfx) = F(\bfy)$.
\end{theorem}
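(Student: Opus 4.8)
The plan is to show that every factor of $\bfx$ occurs as a factor of some Lyndon word in $L(\bfx)$, and then transfer this information to $\bfy$ using the hypothesis $L(\bfx) = L(\bfy)$ together with recurrence. The key elementary fact I would isolate first is: if $u$ is any factor of a recurrent word $\bfx$, then there is a Lyndon word $w \in L(\bfx)$ that has $u$ as a factor. To see this, pick any factor $v$ of $\bfx$ that contains two disjoint occurrences of $u$ (such a $v$ exists by recurrence, since $u$ occurs infinitely often); then among the conjugates of $v$, or rather among the Lyndon words arising from the standard factorization (Lemma~\ref{060720121742}) applied to a suitably long factor extending $v$, one can extract a Lyndon factor of $\bfx$ of length at least $\nabs{v}$ that still contains a copy of $u$. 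A cleaner route: take a long factor $z$ of $\bfx$ with $z = u z' u$ for some $z'$; rotate $z$ so that $u z' u$ becomes a factor of a conjugate of $z u z'$ that is Lyndon — more carefully, any word long enough and containing $u$ twice has a Lyndon \emph{conjugate} or a Lyndon \emph{factor} still containing $u$, because making a word lexicographically least within its conjugacy class only moves the "seam" and with two disjoint copies of $u$ at least one survives intact.

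Granting that lemma, the argument is symmetric and short. Let $u \in F(\bfx)$. By the lemma there is $w \in L(\bfx)$ with $u$ a factor of $w$. Since $L(\bfx) = L(\bfy)$, we have $w \in L(\bfy)$, so $w$ is a factor of $\bfy$, hence $u \in F(\bfy)$. This proves $F(\bfx) \subseteq F(\bfy)$, and by interchanging the roles of $\bfx$ and $\bfy$ we get the reverse inclusion, so $F(\bfx) = F(\bfy)$.

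The main obstacle is the lemma itself, i.e.\ producing from a factor $u$ of the recurrent word $\bfx$ an honest Lyndon \emph{factor} (not merely a conjugate of a factor) of $\bfx$ that still contains $u$. The delicate point is that passing to the Lyndon conjugate of a word $v$ may cut $v$ at a position interior to the unique copy of $u$ we were tracking; this is why we must start with \emph{two} disjoint occurrences of $u$ inside a common factor $v$ of $\bfx$: the Lyndon conjugate of $v$ breaks $v$ at a single position, which can destroy at most one of the two occurrences, leaving the other one as a genuine factor of a Lyndon word that is itself a factor of $\bfx$ (the conjugate of $v$ need not be a factor of $\bfx$, but a factor of $v$ long enough to be Lyndon and to contain $u$ will be — here one uses that $v$ occurs in $\bfx$, so all its factors, in particular the Lyndon one we extract, do too). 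I would write this out carefully, keeping track of which occurrence of $u$ lies entirely to the left or entirely to the right of the cut.
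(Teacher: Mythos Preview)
Your key lemma --- that every factor $u$ of a recurrent word $\bfx$ lies inside some $w \in L(\bfx)$ --- is simply false in the periodic case: for $\bfx = (\wa\wb)^{\omega}$ we have $L(\bfx) = \{\wa,\wb,\wa\wb\}$, and the factor $\wb\wa$ is contained in none of them. The paper accordingly splits off the periodic case first and handles it directly (both words are forced to be powers of conjugate primitive words). Your scheme never separates this case, so as written it cannot succeed.

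Even restricting to aperiodic $\bfx$, your argument for the lemma has a gap. You take $v = u z' u \in F(\bfx)$ and pass to its Lyndon conjugate $v'$; granted, one occurrence of $u$ survives in $v'$, but $v'$ is generally \emph{not} a factor of $\bfx$, so $v' \notin L(\bfx)$. The fix you propose --- ``a factor of $v$ long enough to be Lyndon and to contain $u$'' --- conflates factors of $v'$ with factors of $v$: there is no reason any Lyndon \emph{factor} of $v$ itself should contain $u$ (e.g.\ $v = \wb\wa\wb\wa$ has Lyndon factorization $\wb \cdot \wa\wb \cdot \wa$, and $\wb\wa$ sits in no block). The paper's route is to work with the \emph{infinite} Lyndon factorization of $\bfx$ (Lemma~\ref{060720121742}) together with Lemma~\ref{070720121709}: in the aperiodic case either a tail of $\bfx$ has arbitrarily long Lyndon prefixes, or $\bfx = \prod_{i} w_{i}$ with $\nabs{w_{i}}$ eventually exceeding any bound $k$. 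Taking $k = \nabs{uvu}$ and, by recurrence, locating $uvu$ past the index $i_{k}$, one of the two copies of $u$ must fall entirely within a single block $w_{i} \in L(\bfx)$. Your ``two disjoint copies'' heuristic is exactly the right intuition, but it needs the infinite factorization to be turned into a proof; rotating a finite factor does not do the job.
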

\begin{proof}
Suppose first that $\bfx$ is ultimately periodic. Then, in fact, it is purely periodic because it is recurrent.
Writing $\bfx = u^{\omega}$, where $u$ is a primitive word, it follows that $\bfx$ has only one Lyndon factor of length 
$\nabs{u}$ ---the Lyndon conjugate of $u$--- and none longer than $\nabs{u}$. Thus $\bfy$ has only finitely many Lyndon factors, so it is ultimately periodic by Corollary~\ref{100620121216}, and hence purely periodic because it is recurrent. Write $\bfy = v^{\omega}$ with $v$ primitive. Since the Lyndon conjugate of $u$ is a factor of $\bfy$ and the Lyndon conjugate of $v$ is a factor of $\bfx$, it follows that $v$ and $u$ are conjugates, and thus $F(\bfx) = F(\bfy)$.

Suppose then that $\bfx$ is aperiodic. We show that every $u \in F(\bfx)$ is a factor of a Lyndon factor of $\bfx$. Since $\bfy$ must be aperiodic as well,
the analogous property clearly holds for~$\bfy$, implying that $F(\bfx) = F(\bfy)$. If $\bfx$ satisfies property~(ii) of Lemma~\ref{070720121709}, then some tail $\bfx'$ of $\bfx$ begins with arbitrarily long Lyndon words. Since $\bfx$ is recurrent, it follows that $u$ is a factor of a Lyndon prefix of~$\bfx'$. 
Thus suppose that $\bfx$ satisfies property~(iii) of Lemma~\ref{070720121709}.
Since $\bfx$ is recurrent, there exists a word $v$ such that $uvu \in F(\bfx)$. Let $k = \nabs{uvu}$ and let $i_{k}$ be the index provided by Lemma~\ref{070720121709}. Since $\bfx$ is recurrent, the word $uvu$ occurs in $w_{i_{k}}w_{i_{k} + 1} w_{i_{k} + 2}\cdots $. Since $\nabs{w_{i_{k}+ j}} > k$ for each $j\geq 1$, it follows that $u$ necessarily occurs in some  $w_{i_{k} + j}$.
\end{proof}

For an infinite word $\bfx \in \calA^{\N}$, we define a mapping $\calL_{\bfx} \colon \N \rightarrow \N$ such that $\calL_{\bfx}(n)$ is the number of Lyndon factors of $\bfx$ of length at most $n$. Notice that the mapping $\calL_{\bfx}$ is increasing, but not necessarily strictly increasing, as can be seen from Lemma~\ref{su 15-07-2012:2022}. Notice also that $\calL_{\bfx}$ determines the number of Lyndon factors of any length $k\geq 1$. Indeed, it it given by the expression
$\calL_{\bfx}(k) - \calL_{\bfx}(k-1)$ for $k\geq 2$.

\begin{lemma}\label{su 15-07-2012:2022}
Let $\bff \in \calA^{\N}$ be a Fibonacci infinite word.
Then for all $n\geq1$, we have $\calL_{\bff}(n) = k$, where $k\geq 2$ is the unique integer such that $F_{k} \leq n < F_{k+1}$.
\end{lemma}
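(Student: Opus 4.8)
The plan is to determine exactly which Lyndon words occur as factors of $\bff$, count those of each length, and read off the cumulative count $\calL_{\bff}(n)$. By Lemma~\ref{ma 16-07-2012:1521}(ii), the Lyndon factors of $\bff$ are precisely the Lyndon conjugates of the finite Fibonacci words $f_k$; by Lemma~\ref{ma 16-07-2012:1209}, for a fixed $n$ the Lyndon conjugate of $f_k$ with $k < n$ is a prefix or suffix of the Lyndon conjugate $\wa p_n \wb$ of $f_n$. So first I would observe that for each $k\ge 1$ there is exactly one Lyndon factor of $\bff$ of length $F_k$ — the Lyndon conjugate of $f_k$ (the words $f_1=\wb$ and $f_2=\wa$ both have length $1=F_1=F_2$, and together account for the two letters, i.e.\ the two Lyndon factors of length $1$) — and no Lyndon factor of any length not of the form $F_k$.

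The key step is the second assertion: that $\bff$ has \emph{no} Lyndon factor whose length is not a Fibonacci number. This I would get directly from Lemma~\ref{ma 16-07-2012:1521}(ii): every Lyndon factor of $\bff$ is a Lyndon conjugate of some $f_k$, hence has length $|f_k| = F_k$. No Fibonacci-number-counting subtlety is needed here — it is immediate from that lemma. (Alternatively, one could argue via Lemma~\ref{la140720121644} and Theorem~\ref{ti 10-07-2012:1247}, but invoking Lemma~\ref{ma 16-07-2012:1521} is cleanest.) I should be slightly careful about the two Fibonacci numbers equal to $1$, namely $F_1 = F_2 = 1$: the statement of the lemma asks for the \emph{unique} $k\ge 2$ with $F_k \le n < F_{k+1}$, so for $n=1$ we want $k=2$, and indeed $\calL_{\bff}(1) = 2$ because $\bff$ contains both letters $\wa,\wb$, each a Lyndon factor of length $1$, and these are the Lyndon conjugates of $f_1$ and $f_2$.

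Putting it together: fix $n\ge 1$ and let $k\ge 2$ be the unique integer with $F_k \le n < F_{k+1}$. The Lyndon factors of $\bff$ of length at most $n$ are exactly the Lyndon conjugates of $f_1, f_2, \dots, f_k$ (since $|f_j| = F_j \le F_k \le n$ for $j\le k$, while $|f_j| = F_j \ge F_{k+1} > n$ for $j > k$), and there are no others by Lemma~\ref{ma 16-07-2012:1521}(ii). These $k$ words are pairwise distinct: $f_1$ and $f_2$ are the two distinct letters, and for $3\le j \le k$ the word $f_j$ has length $F_j$ with $F_3 < F_4 < \cdots < F_k$ all distinct and all exceeding $1$, so the corresponding Lyndon conjugates have pairwise distinct lengths. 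Hence $\calL_{\bff}(n) = k$, as claimed.

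The main obstacle is essentially bookkeeping at the low end — making sure the collision $F_1 = F_2$ is handled so that the count for $n=1$ comes out as $2$ and not $1$, which is exactly why the lemma is phrased with $k\ge 2$. Everything else is a direct application of Lemmas~\ref{ma 16-07-2012:1521} and~\ref{ma 16-07-2012:1209} together with the elementary fact that the Fibonacci numbers $F_2, F_3, F_4,\dots$ are strictly increasing, so each integer $n\ge 1$ lies in a unique half-open interval $[F_k, F_{k+1})$ with $k\ge 2$.
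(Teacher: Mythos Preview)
Your proposal is correct and follows essentially the same approach as the paper: invoke the characterization of the Lyndon factors of $\bff$ as exactly the Lyndon conjugates of the $f_k$ (Lemma~\ref{ma 16-07-2012:1521}(ii)), observe these have lengths $F_k$, and count those with length at most $n$. Your write-up is simply more careful than the paper's two-line proof, explicitly handling the distinctness of the conjugates and the $F_1=F_2=1$ edge case that justifies the restriction $k\ge 2$; the paper (which in fact cites Lemma~\ref{ma 16-07-2012:1209} where Lemma~\ref{ma 16-07-2012:1521}(ii) is meant) leaves these points implicit.
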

\begin{proof}
Lemma~\ref{ma 16-07-2012:1209} implies that the Lyndon factors in $\bff$ are precisely the Lyndon conjugates of the finite Fibonacci words $f_{k}$.
Therefore if $F_{k} \leq n < F_{k+1}$, the Lyndon factors of length at most $n$ are the Lyndon conjugates of $f_{1}$, $f_{2}$, \ldots, $f_{k}$,
so that $\calL_{\bff}(n) = k$.
\end{proof}

\begin{theorem}\label{020720120123}
If $\bfx \in \calA^{\N}$ is aperiodic, then $\calL_{\bfx} \geq \calL_{\bff}$.
\end{theorem}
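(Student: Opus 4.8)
The plan is to show that for every $n \geq 1$, the number of Lyndon factors of $\bfx$ of length at most $n$ is at least $\calL_{\bff}(n)$, which by Lemma~\ref{su 15-07-2012:2022} equals the unique $k \geq 2$ with $F_k \leq n < F_{k+1}$. So it suffices to prove: if $n \geq F_k$, then $\bfx$ has at least $k$ distinct Lyndon factors of length at most $n$. Since an aperiodic word has arbitrarily long Lyndon factors (this is exactly what Lemma~\ref{070720121709} together with Corollary~\ref{100620121216} delivers: $\bfx$ is not ultimately periodic, so it satisfies (ii) or (iii), and in either case it has Lyndon factors of every sufficiently large length, in particular one of some length $m \geq F_k$), we may fix a Lyndon factor $w$ of $\bfx$ with $\nabs{w} \geq F_k$. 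Then Theorem~\ref{ti 10-07-2012:1247} (really Lemma~\ref{160620121145}) gives $\calL(w) \geq k$, i.e. $w$ has at least $k$ distinct Lyndon factors.

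The one gap to close is that these $k$ Lyndon factors of $w$ must themselves have length at most $n$, not just at most $\nabs{w}$, which could exceed $n$. The fix is to be more careful about which Lyndon factor of $\bfx$ we pick. First I would choose $k$ so that $F_k \leq n < F_{k+1}$; the goal is $k$ Lyndon factors of length $\leq n$. If $\bfx$ has a Lyndon factor $w$ with $F_k \leq \nabs{w} \leq n$, we are done immediately by Lemma~\ref{160620121145}, because all $\calL(w) \geq k$ of its Lyndon factors have length $\leq \nabs{w} \leq n$. So the remaining case is that $\bfx$ has no Lyndon factor of length in the interval $[F_k, n]$. But $\bfx$ does have arbitrarily long Lyndon factors, so it has a Lyndon factor $w$ with $\nabs{w} > n \geq F_k$; applying Lemma~\ref{pe 15-06-2012:0726} repeatedly, $w = \lambda_w \mu_w$ with $\lambda_w$ Lyndon and $w^{\flat}$ a periodic extension of $\lambda_w$, and $\lambda_w$ is again a Lyndon factor of $\bfx$ (it is a prefix of $w$, hence a factor of $\bfx$). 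Iterating the passage $w \mapsto \lambda_w$, the lengths strictly decrease by at least $1$ each time down to a letter, so at some stage we land on a Lyndon factor $w'$ of $\bfx$ with $\nabs{w'} \leq n$; and since we assumed no Lyndon factor has length in $[F_k, n]$, in fact $\nabs{w'} < F_k$ — which would make the previous term in the chain a Lyndon factor of length in...

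Here is where I expect the main obstacle, and the honest fix is to not pass all the way down but to use the chain more cleverly. Along the descending chain $w = w^{(0)}, w^{(1)} = \lambda_{w^{(0)}}, w^{(2)} = \lambda_{w^{(1)}}, \ldots$ of Lyndon factors of $\bfx$ with strictly decreasing lengths, let $w^{(j)}$ be the first term with $\nabs{w^{(j)}} \leq n$. Then $\nabs{w^{(j-1)}} > n \geq F_k$, so by Lemma~\ref{160620121145} the word $w^{(j-1)}$ has at least $k$ distinct Lyndon factors — but these again might be long. The cleanest route, and the one I would ultimately take, is: show directly that $w^{(j)}$, being a Lyndon factor of $\bfx$ of length $\leq n$ that is one step below a Lyndon factor of length $> n$, must itself have length $\geq$ something forcing $\calL(w^{(j)}) \geq k$. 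Concretely, if $\nabs{\lambda_w} < F_{k}$ whenever $\nabs{w}$ drops below $n$... this needs the quantitative relation $\nabs{\lambda_w} \geq \nabs{w} - \nabs{\lambda_w} = \nabs{\mu_w}$ is false in general, so instead I would invoke that $w^{\flat}$ is a periodic extension of $\lambda_w$, hence $\nabs{\lambda_w} \geq \nabs{w}/2$ only when $\lambda_w$ occurs at least... Rather than force this, the robust argument is: among all Lyndon factors of $\bfx$ of length $\leq n$, pick one, call it $w$, of maximal length; since $\bfx$ has arbitrarily long Lyndon factors, some Lyndon factor $W$ of $\bfx$ has $\nabs{W} > n$, and choosing $W$ of minimal such length, Lemma~\ref{pe 15-06-2012:0726} gives $\lambda_{W}$ is a Lyndon factor with $\nabs{\lambda_W} < \nabs{W}$, so by minimality of $\nabs{W}$ we get $\nabs{\lambda_W} \leq n$, whence $\nabs{w} \geq \nabs{\lambda_W}$; and $W^{\flat}$ being a periodic extension of $\lambda_W$ with $\nabs{W^{\flat}} = \nabs{W} - 1 \geq n \geq F_k$ forces, by the same Fine–Wilf style reasoning used in Lemma~\ref{la140720121644}, that $\nabs{\lambda_W} \geq$ the largest Fibonacci number $\leq n$, i.e. $\nabs{\lambda_W} \geq F_k$ is the claim I need — if $\nabs{\lambda_W}$ were $< F_k$ then $W^{\flat}$ would be a periodic extension of a short word yet have length $\geq F_k > \nabs{W} - 1$, contradiction once $\nabs{\lambda_W} \leq \nabs{W}/2$. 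So the main obstacle is precisely pinning down that $\nabs{\lambda_W} \geq F_k$ (equivalently that the "drop" from a length-$>n$ Lyndon factor to $\lambda$ cannot skip over the whole interval $[F_k, n]$ while $\lambda$ stays short), and I would resolve it by combining the periodic-extension property with the bound $\nabs{\lambda_W} \geq \nabs{\mu_W}$ — which does hold when $\mu_W$ is unbordered and a periodic extension argument shows $\lambda_W$ recurs in $W^{\flat}$ — together with Lemma~\ref{160620121145} applied to $\lambda_W$ itself once $\nabs{\lambda_W} \geq F_k$, giving $\calL(\lambda_W) \geq k$ with all factors of length $\leq \nabs{\lambda_W} \leq n$. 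This yields $\calL_{\bfx}(n) \geq k = \calL_{\bff}(n)$, completing the proof.
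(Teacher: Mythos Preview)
Your overall strategy --- take a Lyndon factor $W$ of $\bfx$ of minimal length exceeding the threshold and apply Lemma~\ref{160620121145} --- is exactly the paper's approach. But your execution contains a genuine gap.

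After fixing $W$ of minimal length $> n$, you try to prove $\nabs{\lambda_{W}} \geq F_{k}$ so that Lemma~\ref{160620121145} applied to $\lambda_{W}$ gives $k$ short Lyndon factors. This inequality is simply false. Take $W = \wa\wb\wa\wb\wb$, the Fibonacci Lyndon word of length $F_{5}=5$: here $\lambda_{W} = \wa\wb$ has length $2 < F_{4}=3$, so with $n=4$ and $k=4$ your claim fails. More generally, Case~(vi) of the proof of Lemma~\ref{160620121145} shows that $\nabs{\lambda_{w}} = F_{n-2} < \nabs{w}/2$ occurs for every Fibonacci Lyndon word in one of the two families, so neither of your proposed justifications works: the inequality $\nabs{\lambda_{W}} \geq \nabs{\mu_{W}}$ is not a consequence of $\mu_{W}$ being unbordered, and the ``periodic extension of a short word'' argument does not produce a contradiction, since $W^{\flat}$ can perfectly well have a small period.

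The fix is to stop focusing on $\lambda_{W}$ and instead use $W$ itself. By the minimality of $\nabs{W}$, \emph{every} proper Lyndon factor of $W$ (not just $\lambda_{W}$) has length at most your threshold. Since $\nabs{W}$ strictly exceeds $F_{k}$, Theorem~\ref{ti 10-07-2012:1247} gives $\calL(W) \geq k+1$, so $W$ has at least $k$ proper Lyndon factors, all short. That is the whole argument. The paper also streamlines slightly by working at the single value $n = F_{k}$ (monotonicity of $\calL_{\bfx}$ then handles all $n$), which avoids your case split on whether some Lyndon factor lands in $[F_{k},n]$.
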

\begin{proof}
Since $\calL_{\bfx}$ is increasing, Lemma~\ref{su 15-07-2012:2022} implies that it suffices to show that $\calL_{\bfx}(F_{k}) \geq k$ for all $k\geq 2$.
This is clear for $k=2$ because $\bfx$ is aperiodic.
Thus assume $k\geq3$, and let~$w$ be a shortest Lyndon factor of \bfx  of length $> F_{k}$; Corollary~\ref{100620121216} ensures that such a word~$w$ exists because $\bfx$ is aperiodic. Theorem~\ref{ti 10-07-2012:1247} implies that $\calL(w) > k$. Since $w$ is as short as possible, all of its proper Lyndon factors are of length at most $F_{k}$.
Therefore
\[
\calL_{\bfx}(F_{k}) \geq \calL(w) - 1 \geq k.
\]
\end{proof}

\begin{remark}
 A classic result by Ehrenfeucht and Silberger~\cite{EhrSil1979} states that if an infinite word has only finitely many unbordered factors, then it is ultimately periodic. Since Lyndon words are unbordered, Theorem~\ref{020720120123} is a quantitative formulation of this with an exact lower bound for the necessary number of unbordered factors.
\end{remark}

\begin{remark}
Looking at Theorem~\ref{020720120123}, one might be tempted to postulate that if $\bfx$ is aperiodic, then 
for all $n\geq 1$, the number of length-$n$ Lyndon factors of $\bfx$ must be at least as large as the number of length-$n$ Lyndon factors of a Fibonacci infinite word,
but this is not true. For example, let $\bff$ is the Fibonacci infinite word over $\{\wa, \wb\}$ with $f_{1} = \wb$ and $f_{2} = \wa$,
and let $\bfx = g(\bff)$, where $g$ is the morphism $\wa \mapsto \word{aab}$, $\wb \mapsto \word{aaab}$. Then it is easy to see that $\bfx$ does not have any Lyndon factors of length $5$, while $\bff$ has a Lyndon factor $\wa\wa\wb\wa\wb$.
\end{remark}

\begin{theorem}\label{060720122147}
Let $\bfx \in \calA^{\N}$ be recurrent. Then $\bfx$ is aperiodic if and only if $\calL_{\bfx} \geq \calL_{\bff}$ with equality if and only if
$\bfx$ is in the shift orbit closure of a Fibonacci infinite word~$\bff$.
\end{theorem}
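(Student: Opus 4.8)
The plan is to prove the theorem as a chain of implications, relying on the machinery already built up. First I would establish the "only if'' direction: if $\bfx$ is aperiodic, then $\calL_{\bfx} \geq \calL_{\bff}$ is immediate from Theorem~\ref{020720120123}, which requires no recurrence hypothesis. For the converse, if $\calL_{\bfx} \geq \calL_{\bff}$ then in particular $\bfx$ has at least $\calL_{\bff}(n) = k$ Lyndon factors of length at most $n$ for every $n$, and since $\calL_{\bff}$ is unbounded (by Lemma~\ref{su 15-07-2012:2022}) $\bfx$ has infinitely many Lyndon factors; hence $\bfx$ is not ultimately periodic by the contrapositive of Corollary~\ref{100620121216}, i.e.\ $\bfx$ is aperiodic. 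So far no recurrence is needed.

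The substance is the equality clause. For the easy direction, suppose $\bfx$ is in the shift orbit closure of $\bff$. Since $\bff$ is uniformly recurrent, the excerpt notes this means $F(\bfx) = F(\bff)$, hence $L(\bfx) = L(\bff)$, hence $\calL_{\bfx} = \calL_{\bff}$. For the hard direction, suppose $\bfx$ is recurrent (now recurrence is used) and $\calL_{\bfx} = \calL_{\bff}$. By the previous paragraph $\bfx$ is aperiodic. I want to conclude $F(\bfx) = F(\bff)$; by Theorem~\ref{060720122059} it suffices to show $L(\bfx) = L(\bff)$. The inclusion I need is $L(\bfx) \subseteq L(\bff)$: once I have that, since $\bfx$ is aperiodic it has Lyndon factors of every length $\geq$ some bound interleaved appropriately, and the count equality $\calL_{\bfx}(n) = \calL_{\bff}(n) = k$ for $F_k \le n < F_{k+1}$ forces $\bfx$ to have exactly the same Lyndon factors as $\bff$ at each length — so the inclusion is actually an equality, and then $F(\bfx) = F(\bff)$, which by uniform recurrence of $\bff$ means $\bfx$ is in its shift orbit closure.

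The key step, and the main obstacle, is proving $L(\bfx) \subseteq L(\bff)$ from the equality $\calL_{\bfx} = \calL_{\bff}$. The idea is induction on the length of Lyndon factors of $\bfx$. The counting function of $\bff$ is very rigid: $\calL_{\bff}$ jumps by exactly $1$ at each Fibonacci number $F_k$ and is constant in between, so $\bfx$ has exactly one Lyndon factor of each length $F_k$ and no Lyndon factors of any non-Fibonacci length. Let $w$ be the unique Lyndon factor of $\bfx$ of length $F_k$. By Theorem~\ref{ti 10-07-2012:1247}, $\calL(w) \geq k$; but all proper Lyndon factors of $w$ have length $< F_k$, hence (by the non-Fibonacci-length vanishing) length in $\{F_2,\dots,F_{k-1}\}$, giving at most $k-1$ of them besides $w$ itself — wait, one must be careful here, since $\calL(w)$ counts distinct Lyndon factors including $w$. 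One gets $\calL(w) \leq (\text{number of Lyndon factors of }\bfx\text{ of length}<F_k) + 1 = (k-1) + 1 = k$. Combined with $\calL(w)\ge k$ from Theorem~\ref{ti 10-07-2012:1247}, this forces $\calL(w)=k$, and the equality clause of Theorem~\ref{ti 10-07-2012:1247} then forces $w$ to be a Fibonacci Lyndon word of length $F_k$, which is precisely a Lyndon conjugate of a finite Fibonacci word and hence a Lyndon factor of $\bff$ by Lemma~\ref{ma 16-07-2012:1209}. Thus every Lyndon factor of $\bfx$ lies in $L(\bff)$; combined with the count equality this gives $L(\bfx)=L(\bff)$, and then Theorem~\ref{060720122059} finishes the proof. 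The delicate point to get right in the write-up is the bookkeeping that $\bfx$ has no Lyndon factor whose length is not a Fibonacci number, and that this is exactly what makes the bound $\calL(w)\le k$ tight; I would phrase this carefully using that $\calL_{\bfx}(F_k) - \calL_{\bfx}(F_{k-1}) = 1$ for all $k$.
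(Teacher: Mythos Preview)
Your argument for the first equivalence contains a real error: you invoke ``the contrapositive of Corollary~\ref{100620121216}'' to deduce that a word with infinitely many Lyndon factors is aperiodic, but that is the \emph{converse} of the corollary, not its contrapositive, and it is false without recurrence. For instance $\bfx=\o\i\i\i\cdots$ is ultimately periodic yet has the infinitely many Lyndon factors $\o\i^{m}$, and in fact satisfies $\calL_{\bfx}\geq\calL_{\bff}$. So your claim ``so far no recurrence is needed'' is wrong. The fix is what the paper does: a recurrent ultimately periodic word is purely periodic, hence $\calL_{\bfx}$ is eventually constant and eventually drops below~$\calL_{\bff}$.

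For the hard direction of the equality clause you take a genuinely different route from the paper, and it is a nice one. The paper argues by minimal counterexample: assuming $L(\bfx)\neq L(\bff)$ it takes the least $k$ with distinct $w\in L(\bfx)$ and $z\in L(\bff)$ of length $F_{k}$, decomposes $w=\lambda_{w}\mu_{w}$ and $z=\lambda_{z}\mu_{z}$, uses the Fibonacci-only length constraint to force $\{\nabs{\lambda},\nabs{\mu}\}=\{F_{k-1},F_{k-2}\}$, and then gets $\{\lambda_{w},\mu_{w}\}=\{\lambda_{z},\mu_{z}\}$ by minimality, whence $w=z$. Your route instead squeezes $\calL(w_{k})$ between~$k$ (Theorem~\ref{ti 10-07-2012:1247}) and~$k$ (the count $\calL_{\bfx}(F_{k})=k$) and invokes the equality clause of Theorem~\ref{ti 10-07-2012:1247} directly; this exploits that theorem more fully and avoids the decomposition argument. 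There is, however, a gap in your write-up: over $\{\wa,\wb\}$ there are \emph{two} Fibonacci Lyndon words of each length $F_{k}$, one for each of the two Fibonacci infinite words, and your sentence ``hence a Lyndon factor of $\bff$'' never says which~$\bff$ nor checks that the same one works for every~$k$. The paper pins down $\bff$ at the outset via the length-$3$ Lyndon factor (exactly one of $\wa\wa\wb$, $\wa\wb\wb$ lies in $L(\bfx)$). In your framework the patch is short: your counting forces every shorter $w_{j}$ to be a factor of $w_{k}$, so in particular $w_{4}\in\{\wa\wa\wb,\wa\wb\wb\}$ occurs in every $w_{k}$; since one Fibonacci word forbids $\wa\wa$ and the other forbids $\wb\wb$, this anchors all the $w_{k}$ in a single~$L(\bff)$.
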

\begin{proof}
We start by proving the first equivalence. Suppose $\bfx$ is ultimately periodic. Then it is purely periodic because it is recurrent. Thus $\calL_{\bfx}$ is ultimately constant, so $\calL_{\bfx}(n)<\calL_{\bff}(n)$ for all sufficiently large~$n$. 
Conversely, if $\calL_{\bfx}(n) < \calL_{\bff}(n)$ for some $n\geq 1$, then Theorem~\ref{020720120123} implies  that $\bfx$ is ultimately periodic.

Let us next prove the second equivalence. 
If $\bfx$ is in the shift orbit closure of $\bff$, then $F(\bfx) = F(\bff)$ because $\bff$ is uniformly recurrent.
Therefore the identity $\calL_{\bfx} = \calL_{\bff}$ holds. 
Let us prove the converse, and suppose that $\calL_{\bfx} = \calL_{\bff}$ for some Fibonacci infinite word $\bff$. Then in particular, $\calL_{\bfx}(1) = \calL_{\bff}(1)=2$, so that $\bfx$ consists of two distinct letters, say $\wa$ and $\wb$ with $\wa < \wb$.
Since $\calL_{\bfx}(3) - \calL_{\bfx}(2) = 1$, exactly one of $\wa\wa\wb$ and $\wa\wb\wb$ occurs in~$\bfx$. If $\wa\wa\wb$ is in $L(\bfx)$,
we may assume that $\bff = \lim_{\ntoinf}f_{n}$ is the Fibonacci word with $f_{1} = \wb$ and $f_{2} = \wa$,
so that $\wa\wa\wb \in L(\bff)$. Similarly, if $\wa\wb\wb$ is in $L(\bfx)$, we may assume that $\bff = \lim_{\ntoinf}f_{n}$ is the Fibonacci word with $f_{1} = \wa$ and $f_{2} = \wb$,
so that $\wa\wb\wb \in L(\bff)$. Then a Lyndon word of length at most 3 is a factor of $\bfx$ if and only if it is a factor of~$\bff$.
We will show next that $L(\bfx) = L(\bff)$; then Theorem~\ref{060720122059} implies that $F(\bfx) = F(\bff)$
because both $\bfx$ and $\bff$  are recurrent.

If $L(\bfx) \neq L(\bff)$, then the identity $\calL_{\bfx} = \calL_{\bff}$ implies that
there exist an integer $k$ and distinct Lyndon words $w,z$ with $\nabs{w} = \nabs{z} = F_{k}$
such that $w$ is a factor of $\bfx$ and $z$ is a factor of~$\bff$. Let us assume that $k$ is as small as possible. 
Since the Lyndon factors of length at most 3 in $\bfx$ and $\bff$ coincide, we have $\nabs{w} > 3$, and thus $k\geq 5$.
Recall that $w$ can be written as $w = \lambda_{w}\mu_{w}$ where $\lambda_{w}$ and $\mu_{w}$ are Lyndon words by Lemma~\ref{pe 15-06-2012:0726}.
Furthermore, each of $\nabs{\lambda_{w}}$ and $\nabs{\mu_{w}}$ is a Fibonacci number, and therefore $\nabs{\lambda_{w}} + \nabs{\mu_{w}} = \nabs{w} = F_{k}$
yields $\nsset{\nabs{\lambda_{w}}, \nabs{\mu_{w}}} = \nsset{F_{k-1}, F_{k-2}}$.
The same reasoning shows that $z = \lambda_{z} \mu_{z}$ and $\nsset{\nabs{\lambda_{z}}, \nabs{\mu_{z}}} = \nsset{F_{k-1}, F_{k-2}}$.
But since $k\geq5$, we have $F_{k-2}\geq2$, which means that $\bsset{\lambda_{w}, \mu_{w}} = \bsset{\lambda_{z}, \mu_{z}}$ because the set of Lyndon factors of $\bfx$ of length less than $F_{k}$ coincides with the set of Lyndon factors of $\bff$ of length less than $F_{k}$ and there is precisely one Lyndon factor of length $F_{k-2}$ and precisely one Lyndon factor of length $F_{k-1}$.
Consequently, $z$ is a product of $\lambda_{w}$ and $\mu_{w}$. However, if $z = \lambda_{w}\mu_{w}$, then $z = w$, and if $z=\mu_{w}\lambda_{w}$, then $z$ is not a Lyndon word because it is a proper conjugate of the Lyndon word $\lambda_{w}\mu_{w} = w$, both of which are contradictions.
\end{proof}

\begin{remark}
Theorem~\ref{060720122147} shows that the shift orbit closure of a Fibonacci infinite word $\bff$ is characterized by the mapping~$\calL_{\bff}$, up to renaming letters.
But in general, the mapping $\calL_{\bfx}$ does not characterize a recurrent word~$\bfx$.
For example, the identity $\calL_{\bfx} = \calL_{\bfy}$ holds for the two periodic words $\bfx = (\word{000001})^{\omega}$ and $\bfy = (\word{000101})^{\omega}$, but clearly 
 $F(\bfx) \neq F(\bfy)$ and  $F(\bfx) \neq F(c(\bfy))$,  where $c$ is the morphism $\o \mapsto \i$, $\i \mapsto \o$.
\end{remark}

\section{Acknowledgement}

I thank Tero Harju for several useful discussions and for bringing the result of Siromoney et al.\ to my attention.


\begin{thebibliography}{99}
\bibitem{AllSha2003} J.--P. Allouche and J. Shallit. \emph{Automatic Sequences: Theory, Applications, Generalizations}. Cambridge University Press, 2003.
\bibitem{BerdeL1997} J. Berstel and A. de Luca. Sturmian words, Lyndon words and trees. \emph{Theoret. Comput. Sci.}~\textbf{178} (1997), 171--203.
\bibitem{Cassaigne2008} J. Cassaigne. On extremal properties of the Fibonacci word. \emph{RAIRO-Theor. Inf. Appl.}~\textbf{42}(4) (2008), 701--715.
\bibitem{CasMigRes1999} M. G. Castelli, F. Mignosi, and A. Restivo. Fine and Wilf's theorem for three periods and a generalization of Sturmian words.
\emph{Theoret. Comput. Sci.}~\textbf{218} (1999), 83--94.
\bibitem{ChaHanPer2004} J.-M. Champarnaud, G. Hansel, and D. Perrin. Unavoidable sets of constant length. \emph{International Journal of Algebra and Computation}~\textbf{14}(2) (2004), 241--251.
\bibitem{CheFoxLyn1958} K. T. Chen, R. H. Fox, and R. C. Lyndon. Free differential calculus, IV. The quotient groups of the lower central series.
\emph{Ann. Math.}, 2nd Ser.,~\textbf{68}(1) (1958), 81--95.
\bibitem{CroRyt1995} M. Crochemore and W. Rytter. Squares, cubes, and time-space efficient string searching. \emph{Algorithmica}~\textbf{13} (1995), 405--425.
\bibitem{CurSaa2009} J. D. Currie and K. Saari. Least periods of factors of infinite words. \emph{RAIRO-Theor. Inf. Appl.}~\textbf{43} (2009), 165--178.
\bibitem{GocSaaSha2012} D. Go\v{c}, K. Saari, and J. Shallit. Primitive words and Lyndon words in automatic and linearly recurrent sequences. Submitted.
\bibitem{deLuca1981} A. de Luca. A combinatorial property of the Fibonacci words. \emph{Inf. Process. Letters}~\textbf{12}(4) (1981), 193--195.
\bibitem{DieHarNow2010} V. Diekert, T. Harju, and D. Nowotka. Weinbaum factorizations of primitive words. \emph{Russian Mathematics (Iz VUZ)}~\textbf{54}(1) (2010), 16--25.
\bibitem{Duval1983} J.--P. Duval. Factorizing words over an ordered alphabet. \emph{J. Algorithms}~\textbf{4} (1983), 363--381.
\bibitem{EhrSil1979} A. Ehrenfeucht and 	D.~M.~Silberger. Periodicity and unbordered segments of words. \emph{Discrete Math.}~\textbf{26} (1979), 101--109.
\bibitem{Fischler2006} S. Fischler. Palindromic prefixes and episturmian words. \emph{J. Combin. Theory Ser. A} \textbf{113} (2006), 1281--1304.
\bibitem{FinWil1965} N. J. Fine and H. S. Wilf. Uniqueness theorems for periodic functions. \emph{Proc. Amer. Math. Soc.}~\textbf{16}(1) (1965), 109--114.
\bibitem{FreMai1978} H. Fredricksen and J. Maiorana. Necklaces of beads in $k$ colors and $k$-ary de Bruijn sequences. \emph{Discrete Math.}~\textbf{23} (1978), 207--210.

\bibitem{GatFlo2012} T. Gateva--Ivanova and G. Fl\obackslash{}ystad. Monomial algebras defined by Lyndon words. Preprint available at \url{http://arxiv.org/abs/1207.6256}.
\bibitem{Knuth} D. E. Knuth. \emph{The Art of Computer Programming}. Addison--Wesley, Reading, Mass., 1968.
\bibitem{Lothaire1997} M. Lothaire. \emph{Combinatorics on Words},  Cambridge University Press, Cambridge, 1997.
\bibitem{Lothaire2002} M. Lothaire. \emph{Algebraic Combinatorics on Words},  Cambridge University Press, Cambridge, 2002.
\bibitem{MigResSal1998} F. Mignosi, A. Restivo, and S. Salemi. Periodicity and the golden ratio. \emph{Theoret. Comput. Sci.}~\textbf{204} (1998), 153--167.
\bibitem{SirMatDarSub1994} R. Siromoney, L. Mathew, V.~R.~Dare, and K.~G.~Subramanian. Infinite Lyndon Words. \emph{Inf. Process. Letters}~\textbf{50} (1994), 101--104.
\bibitem{TijZam2009} R. Tijdeman and L. Zamboni. Fine and Wilf words for any periods II. \emph{Theoret. Comput. Sci.}~\textbf{410} (2009), 3027--3034.
\bibitem{WenWen1994} Z.-X. Wen and Z.-Y. Wen. Some properties of the singular words of the Fibonacci word. \emph{Europ. J. Combinatorics}~\textbf{15} (1994), 587--598.
\end{thebibliography}
\end{document}